\pdfoutput=1
\documentclass[a4paper]{amsart}
 \usepackage{amsmath}
\usepackage{amssymb}
\usepackage{amsthm}
\usepackage{enumerate}
\usepackage[mathscr]{eucal}

\usepackage{amsmath}
\usepackage{amsfonts}
\usepackage{amssymb}
\usepackage[all]{xypic}
\usepackage{hyperref}
\usepackage{amsmath,amsfonts,amsthm,epsfig}
\usepackage{graphicx,epsfig,color}

\input cyracc.def
\font \tencyr=wncyr10
\newfam\cyrfam
\font\tencyr=wncyr10
\font\sevencyr=wncyr7
\font\fivecyr=wncyr5

\textfont\cyrfam=\tencyr \scriptfont\cyrfam=\sevencyr
\scriptscriptfont\cyrfam=\fivecyr

\let\kappa\varkappa

\newcommand{\N}{\mathbb{N}}
\newcommand{\R}{\mathbb{R}}

\newcommand{\C}{\mathcal{C}}
\newcommand{\A}{\mathcal{A}}
\newcommand{\E}{\boldsymbol{E}}

\newcommand{\LL}{\mathcal{L}}

\renewcommand{\O}{\textrm{o}}

\renewcommand{\S}{\boldsymbol{S}}

\newcommand{\x}{\boldsymbol{x}}
\renewcommand{\t}{\boldsymbol{t}}

\newcommand{\REF}[1]{\eqref{#1}}


\newcommand{\df}{\stackrel{\mathrm{def}}{=}}

\newcommand{\virg}[1]{``#1''}


\setcounter{MaxMatrixCols}{30}

\renewcommand{\kappa}{\mathrm{VSym}}

\setlength{\textwidth}{121.9mm}
\setlength{\textheight}{176.2mm}


\theoremstyle{plain}
\newtheorem{theorem}{Theorem}[section]
\newtheorem{prop}[theorem]{Proposition}
\newtheorem{lemma}{Lemma}[section]
\newtheorem{corol}{Corollary}[theorem]

\theoremstyle{definition}
\newtheorem{definition}{Definition}[section]
\newtheorem{remark}{\textnormal{\textbf{Remark}}}


\theoremstyle{remark}



\numberwithin{equation}{section}


\renewcommand{\d}{\mathrm{d}}



\begin{document}
\title[Natural boundary conditions]%
{Natural boundary conditions in geometric calculus of variations}
\author[G. Moreno \and M.E. Stypa]%
{Giovanni Moreno* \and Monika Ewa Stypa**}

\newcommand{\acr}{\newline\indent}

\address{\llap{*\,}Mathematical Institute in Opava\acr
                   Silesian University in Opava\acr
                   Na Rybnicku 626/1\acr
                   746 01 Opava\acr
                   CZECH REPUBLIC}

\email{Giovanni.Moreno@math.slu.cz}

\address{\llap{**\,}Department of Mathematics\acr
                    Salerno University\acr
                    Via Ponte Don Melillo\acr
                    84084 Salerno\acr
                    ITALY}
\email{mstypa@unisa.it}

\thanks{The authors are thankful to the referee for carefully reading the manuscript, and to the organizers of the $17^\mathrm{th}$ summer school in Global Analysis and its Application, held in Levo\v{c}a, August 17--22, for providing a stimulating environment to their research. The first author is also thankful to the   Grant Agency  of the Czech Republic (GA \v CR)
for financial support under  the project P201/12/G028. The second author is grateful to the Ph.D. program  of Salerno University for financial support.}

\subjclass[2010]{Primary 
%
58A99, 
	49Q99,  
		35R35
; Secondary 
14M15, 
58A20, 
58A10.  
}

\keywords{Global Analysis, Calculus of Variations, Free Boundary Problems, Jet Spaces,  Flags.}

\begin{abstract}
In this paper we obtain   natural boundary conditions for a large class of variational problems with free boundary values. In comparison with the already existing examples, our framework displays complete freedom   concerning the topology of  $Y$---the       {manifold of dependent and independent variables} underlying a given problem---as well as the order of its Lagrangian. Our result follows from the natural behavior, under boundary--friendly transformations, of an operator, similar to the Euler map,   constructed in the context of   relative horizontal forms on jet bundles (or Grassmann fibrations) over $Y$. 
Explicit examples of natural boundary conditions are obtained when $Y$ is an  $(n+1)$--dimensional domain in $\R^{n+1}$,  and the Lagrangian is first--order (in particular, the hypersurface area).
\end{abstract}

\maketitle
\tableofcontents

\section*{Introduction}
Let $Y$ be a smooth (real) manifold of dimension $n+1$, with nonempty boundary $\partial Y$. 
\begin{definition}\label{defAmmiissibile}
An $n$--dimensional submanifold   $L\subseteq Y$ such that
\begin{enumerate}
\item  $L$ is connected, compact and oriented;
\item $\partial L=L\cap\partial Y$;
\item $ L$ is nowhere tangent to $\partial Y$,
\end{enumerate}
is called \emph{admissible}; the totality of such submanifolds is denoted by     $\A_Y$.
\end{definition}
Introduce a local  coordinate system  $(\x,u)$ on $Y$, where $\x:=(x^1,\ldots,x^n)$. Let  $\lambda=\LL\d^n\x$ be  an $r^\mathrm{th}$ order Lagrangian, i.e., let $\LL=\LL(\x, u,u_I)$ and  $I$  denote a multi--index of length $\leq r$. Suppose that, in such coordinates, an element $L\in\A_Y$ is the graph of a function $u=u(\x)$, defined on  a connected and bounded domain $\Omega\subseteq\R^n$: then    the integral
\begin{equation}\label{eqIntegraleBruttissimo}
\S_\lambda[L]:=\int_\Omega \LL \left(\x,u(\x),\frac{\partial^{|I|} u}{\partial \x^I}\right)\d^n\x
\end{equation}
  makes sense;   it can also be  given a coordinate--free formulation.  %
  
\begin{definition} \label{defProblVarMASTER}
The   \emph{variational problem with free boundary values}  determined by the Lagrangian $\lambda$  on $Y$ consists of finding   the elements of $\A_Y$ which are critical for  $\S_\lambda$.
\end{definition}
Indeed, if properly understood in a geometric framework, $\S_\lambda$ is a real--valued function on $\A_Y$;    the choice of the denomination is justified  by  \REF{eqIntegraleBruttissimo}: if $L$ is allowed to vary within the class $\A_Y$, then the function $u$ describing $L$ is \virg{free} to take any boundary value, as long as $u$ maps $\partial \Omega$ into $\partial Y$.\par
The main theoretical question addressed in this paper is the following: \emph{do the solutions to a variational problem with free boundary values fulfill some extra equation(s) besides the Euler--Lagrange equations}? A positive answer has already been given in \cite{Mor2007,Mor2010,MorenoCauchy}, but without detailed proofs: Section \ref{secRelativeEulerOperator} is devoted to review this result by adding the missing details.\par 
Sections \ref{secFlagFibrations}--\ref{secRelativeEulerOperator} deal with   technical aspects of flag fibrations and relative $\C$--spectral sequences, respectively: the reader not interested in theoretical considerations may skip them, and jump to Corollary \ref{eqTCconforme}, which summarizes their results.    Section \ref{secPreliminare}  explains the key used  to obtain the main result  (Section \ref{secFinale}), namely    the natural behavior of the relative Euler map,   under boundary--friendly transformations. As certainly know  all   who work  in geometric variational calculus and cohomological theory of nonlinear PDEs, the Euler map is but  a small feature of a general theory (comprising, e.g., conservation laws, Helmoltz conditions, hamiltonian structures, recursion operators, etc.), which possesses a natural \emph{relative} analog: we added Sections \ref{secFlagFibrations}--\ref{secRelativeEulerOperator} just to give  a glimpse of it. \par The applicative purpose of this paper is to present explicit  examples of natural boundary conditions.
 In the rather  {pedagogical}  Section \ref{secEsempioMotivante}, we review the classical analytic solution, given by van Brunt in a recent (2006) book  \cite{Brunt}, to one of the  simplest examples of variational problems with free boundary values. More involved examples are suggested by real--life circumstances, 
  as, e.g., the problem of finding the equilibrium of a soap film freely sliding along the inner wall of an arbitrarily--shaped pipe,  discussed in Subsection \ref{CondizioniPerSupericie}\par
The   geometric point of view is the backbone of this paper: besides allowing a transparent formulation of   the main problem, it provides a key tool to obtain a  solution.  Analytic   formulation \REF{eqIntegraleBruttissimo} will be used whenever it is necessary to perform actual computations, as well as a source of valuable insights. For example,  the Euler--Lagrange equations
\begin{equation}
\frac{\delta\LL}{\delta u}=0,
\end{equation}
where $\frac{\delta\LL}{\delta u}$ is the Euler--Lagrange derivative of $\LL$, 
are obtained by a well--known  manipulation of  \REF{eqIntegraleBruttissimo}, 
under the assumption that the variations of $u$   have a compact support in $\overset{\circ}{\Omega}$: hence, a solution to the main problem should be  a stronger condition than the Euler--Lagrange equations themselves. This clue was confirmed by   the discovery of the relative Euler map \cite{Mor2007}, reviewed in Section \ref{secRelativeEulerOperator}.\par
\section{Generalities on geometric calculus of variations}\label{secPreliminare}
The Euler map $\E$ appears, in one form or another,  in all geometric frameworks for Variational Calculus that are based on the language of differential forms on jet spaces (called here \emph{Grassmann fibrations}  following the  recent paper \cite{Zbynek}, of which we also adopt the notation). Building the Grassmann fibration\footnote{In Vinogradov and his school's approach, $G_n^rY$ is denoted by $J^r(Y,n)$, see \cite{MR1857908}} $G^r_n Y$ over $Y$  is just a coordinate--free way to add new coordinates $u_I$, with $|I|\leq r$, to the manifold $Y$, in such a way that $\lambda$ can be considered as an $n$--form on $G^r_n Y$. In this new perspective, \REF{eqIntegraleBruttissimo} can be rewritten without mentioning the local expression of $L$: indeed,  since  $G^r_nL\equiv L$, being $\dim L=n$, the canonical inclusion $\iota_L:L\subseteq Y$ is lifted to an immersion $j^rL:=G^r_n\iota_L:L\longrightarrow G^r_n Y$, which allows to pull any Lagrangian  back to $L$. In other words, \REF{eqIntegraleBruttissimo} reads
\begin{equation}\label{eqIntvarvar}
\S_\lambda:L\in\A_Y\longmapsto \int_L j^rL^\ast\lambda\in\R.
\end{equation}
Passing from \REF{eqIntegraleBruttissimo} to \REF{eqIntvarvar} is far from being a mere aesthetic exercise. It deploys powerful tools to attack  the main problem: essentially, the possibility of using transformations   which mix dependent and independent variables ($\x$ and $u$, respectively,  in the above coordinate system).  In Subsection \ref{CondizioniPerCurve}  we show how a suitable change of coordinates can help  avoid the lengthy computations proposed in Section \ref{secEsempioMotivante}, and how to obtain some useful formulae which, to the authors' opinion, would be very hard (though not impossible) to discover relying on pure analytic methods.\par
The power of transformation methods descends from the natural character of the Euler map: in the principal geometric frameworks for Variational Calculus (Krupka's variational sequences \cite{MR0394755,MR1062026}, Anderson's variational bicomplex \cite{MR590637}, and Vinogradov's $\C$--spectral sequence \cite{MR739952}) the Euler map connects two spaces, say $L(Y)$ and $K(Y)$, containing, respectively,  the Lagrangians and the Euler--Lagrange expressions for the variational problems on $Y$.  We shall not go into the details, since a lot of excellent literature has been written on the subject; nonetheless, we stress that the natural character of the association $Y\longmapsto G_n^rY$, where $r\leq\infty$, i.e., the canonical way to lift transformations of $Y$ to the Grassmann fibrations, makes the associations $Y\longmapsto L(Y), K(Y)$ natural as well.  Indeed, $L(Y)$ and $K(Y)$ are usually defined as quotients of sub--complexes (or sub--sequences) of the de Rham complex of finite (or infinite--order) Grassmann fibrations, and as such they inherit the pull--back from differential forms. In other words,   any diffeomorphism $F:\underline{Y}\longrightarrow Y$, determines a commutative diagram
\begin{equation}\label{eqNaturalita}
\xymatrix{
L(Y)\ar[r]^{\E_Y}   &   K(Y)\\
L(\underline{Y})  \ar[r]^{\E_{\underline{Y}}}  \ar[u]^{F^\ast}&   K(\underline{Y}). \ar[u]^{F^\ast}
}
\end{equation}
If $F$ is a wisely--chosen change of coordinates,  then \virg{the long way} from $L(Y)$ to $K(Y)$, i.e., $F^\ast\circ \E_{\underline{Y}}\circ (F^\ast)^{-1}$ may be more convenient  concerning computations. But this is just a category--theoretic restatement  of the well--known transformation rule for  the Euler--Lagrange equations, which was already known to E. Cartan: the purpose of this paper is to extend it to the class of variational problems with free boundary values, where the Euler--Lagrange equations are sided by the so--called natural boundary conditions, or, equivalently, they are replaced by the \emph{relative Euler--Lagrange equations}.\par
Roughly speaking, the \virg{relative} version of the Euler map arises because  of the boundary $\partial Y$.
\begin{definition}\label{defDefinizioneBordo}
By abuse of notation,\footnote{$\partial G_n^rY$ is more like a prolongation, or lift, to $G_n^rY$, of the boundary $\partial Y$.} we shall put 
 \begin{equation}\label{eqDefinizioneBordo}
\partial G_n^rY:=(\rho^{r,0})^{-1}(\partial Y).\nonumber
\end{equation}
\end{definition}
  Indeed, the canonical inclusion $\iota: \partial G_n^rY \subseteq G_n^rY $ determines a differential algebra epimorphism $\iota^\ast: \Omega( G_n^rY) \longrightarrow \Omega(\partial G_n^rY) $ whose kernel $\ker\iota^\ast:= \Omega( G_n^rY,\partial G_n^rY)$ is, by definition,\footnote{Such a construction is common in Differential Topology (see, e.g., \cite{MR658304}).} the ideal of \emph{relative differential forms} on $G_n^rY$. Much as  $L(Y)$, $K(Y)$, and $\E_Y$ are constructed out of (classes of) differential forms on $G_n^rY$ and natural morphism connecting them, their \virg{relative counterparts}, denoted by  $L(Y,\partial Y)$, $K(Y,\partial Y)$, and $\E^{\mathrm{rel}}_Y$, respectively,  are built out of relative differential forms on $G_n^rY$. Details of this construction, carried out in the context of $\C$--spectral sequences (meaning, in particular, $r=\infty$),  can be found in \cite{Mor2007,Mor2010}\par
Section \ref{secRelativeEulerOperator}   explains why the relative Euler--Lagrange equations 
\begin{equation}\label{eqRelEL}
\E^{\mathrm{rel}}_Y(\lambda)=0
\end{equation}
 represent a solution to the main problem. More precisely, since $K(Y,\partial Y)$ identifies with the direct sum $K(Y)\oplus K(\partial Y)$,  the single equation \REF{eqRelEL} captures two equations simultaneously, viz., the Euler--Lagrange equations
 \begin{equation}\label{eqEL}
\E_Y(\lambda)=0,
\end{equation}
which involve $n$ independent variables, and the natural boundary conditions
 \begin{equation}\label{eqTC}
\E_{Y}^\partial(\lambda)=0,
\end{equation}
where the number of independent variables involved is $n-1$.
Besides providing a common environment for such heterogeneous equations, the formalism of flag fibrations, introduced by the first author in \cite{MorenoCauchy}, and reviewed in Section \ref{secFlagFibrations}, allows to write down \REF{eqTC} in a workable way.\footnote{This paper is based on the talk  \virg{A geometrical framework for Lagrangian theories which involve $n$ and $n-1$ independent variables simultaneously} delivered by the first author on August 24, 2012, within the conference \virg{Variations on a Theme}, dedicated to D. Krupka's seventieth birthday.}
\par
The natural character of relative Euler map follows automatically from its very definition: in other words,   the  \virg{relative} version of diagram \REF{eqNaturalita}, paraphrased by Lemma \ref{lemFormulaTrasformante} below, needs not to be proved. 
\begin{lemma}\label{lemFormulaTrasformante}
Let $F$ be boundary--friendly, i.e., $F(\partial\underline{Y})\subseteq \partial Y$. Then  
\begin{equation}\label{eqFormulaTrasformante}
 \E^{\mathrm{rel}}_{ {Y}}=F^\ast\circ \E^{\mathrm{rel}}_{\underline{Y}}\circ (F^\ast)^{-1}.\nonumber
\end{equation}
\end{lemma}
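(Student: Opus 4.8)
The plan is to deduce the identity of Lemma~\ref{lemFormulaTrasformante} from pure functoriality: it is nothing but the \virg{relative} version of the commutative square \eqref{eqNaturalita}, and once that square is available with $L(Y),K(Y),\E_Y$ replaced by $L(Y,\partial Y),K(Y,\partial Y),\E^{\mathrm{rel}}_Y$, the asserted formula $\E^{\mathrm{rel}}_Y=F^\ast\circ\E^{\mathrm{rel}}_{\underline Y}\circ(F^\ast)^{-1}$ is read off by solving it for $\E^{\mathrm{rel}}_Y$. So the work is not a computation but the verification that every ingredient used to build $\E^{\mathrm{rel}}$ is natural and that the canonical lift of $F$ to the Grassmann fibration is again boundary--friendly; only the latter is not already inherited verbatim from the absolute theory.

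First I would record the naturality that comes for free. The assignment $Y\longmapsto G^r_nY$ (with $r=\infty$, since $\E^{\mathrm{rel}}$ lives in the $\C$--spectral sequence) is functorial: a diffeomorphism $F:\underline Y\to Y$ lifts canonically to a diffeomorphism $\widehat F:=G^r_nF$ of Grassmann fibrations, which commutes with the bundle projections, $\rho^{r,0}\circ\widehat F=F\circ\rho^{r,0}$, and which induces, through pull-back, the isomorphism of differential algebras $F^\ast$ appearing in \eqref{eqNaturalita}; being a pull-back, $F^\ast$ preserves the Cartan distribution, hence the $\C$--filtration. This is exactly what makes \eqref{eqNaturalita} commute, and it is reused here without change.

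The step carrying the (small) weight of the argument is the passage to the relative setting. Since $F$ is a diffeomorphism of manifolds with boundary, the hypothesis $F(\partial\underline Y)\subseteq\partial Y$ is in fact an equality, so $F^{-1}$ is boundary--friendly too; combining this with $\rho^{r,0}\circ\widehat F=F\circ\rho^{r,0}$ and Definition~\ref{defDefinizioneBordo} yields
\[
\widehat F\bigl(\partial G^r_n\underline Y\bigr)=\widehat F\bigl((\rho^{r,0})^{-1}(\partial\underline Y)\bigr)\subseteq(\rho^{r,0})^{-1}(\partial Y)=\partial G^r_nY ,
\]
and likewise for $\widehat{F^{-1}}$. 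Hence the lifts $\widehat F,\widehat{F^{-1}}$ restrict to the boundary prolongations, so the pull-backs composing $F^\ast$ intertwine the restriction epimorphisms $\iota^\ast$ and therefore map the kernels $\Omega(G^r_nY,\partial G^r_nY)$ and $\Omega(G^r_n\underline Y,\partial G^r_n\underline Y)$ into each other: $F^\ast$ restricts to an \emph{isomorphism} of the differential ideals of relative forms, and in particular respects the $\C$--filtration induced on them.

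Finally I would assemble the pieces exactly as in the absolute case. By the construction recalled in Section~\ref{secRelativeEulerOperator}, the spaces $L(Y,\partial Y)$ and $K(Y,\partial Y)$ are subquotients of $\Omega(G^r_nY,\partial G^r_nY)$ for the induced $\C$--filtration, and $\E^{\mathrm{rel}}_Y$ is the corresponding $d_1$--type differential between them; all of this is manufactured out of the de~Rham differential and the filtration projections, each of which commutes with the restricted $F^\ast$ by the previous step. Hence $F^\ast$ descends to isomorphisms $L(\underline Y,\partial\underline Y)\to L(Y,\partial Y)$ and $K(\underline Y,\partial\underline Y)\to K(Y,\partial Y)$ fitting into the relative analogue of \eqref{eqNaturalita},
\[
\xymatrix{
L(Y,\partial Y)\ar[r]^{\E^{\mathrm{rel}}_Y}   &   K(Y,\partial Y)\\
L(\underline{Y},\partial\underline Y)  \ar[r]^{\E^{\mathrm{rel}}_{\underline{Y}}}  \ar[u]^{F^\ast}&   K(\underline{Y},\partial\underline Y), \ar[u]^{F^\ast}
}
\]
and solving $F^\ast\circ\E^{\mathrm{rel}}_{\underline Y}=\E^{\mathrm{rel}}_Y\circ F^\ast$ for $\E^{\mathrm{rel}}_Y$ gives the claim. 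One may add that, under the identification $K(Y,\partial Y)=K(Y)\oplus K(\partial Y)$, the two components of this rule are the classical transformation law for the Euler--Lagrange expressions contained in \eqref{eqNaturalita} and its $(n-1)$--variable analogue on $\partial Y$. The main---indeed the only---obstacle is the third step: checking that boundary--friendliness lifts to $\widehat F$, so that $F^\ast$ preserves the ideal of relative forms together with its $\C$--filtration. Once that is granted, the lemma is a formal consequence of the functoriality already present in the absolute theory---which is exactly the sense in which it \virg{needs not to be proved}.
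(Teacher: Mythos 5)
Your proposal is correct and is precisely the argument the paper has in mind: the text explicitly declines to prove the lemma on the grounds that it "follows automatically from its very definition," i.e.\ from the naturality of the whole construction, and your write-up simply makes that explicit by checking that boundary-friendliness lifts to $\widehat F=G^r_nF$ (via $\rho^{r,0}\circ\widehat F=F\circ\rho^{r,0}$), so that $F^\ast$ preserves the ideal of relative forms and hence descends to the relative analogue of diagram \eqref{eqNaturalita}. No substantive difference from the paper's (implicit) route.
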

Lemma \ref{lemFormulaTrasformante}, together with Corollary \ref{corollariosintetizzante}, will be employed  in the last Section \ref{secFinale} to obtain new examples of natural boundary conditions.

\section{A motivating example}\label{secEsempioMotivante}

Let $n=1$,   $Y=[a,b]\times\R$, and $\lambda=\LL(x,u,u')\d x$: in this case,   functions can be identified with their graphs, and  $ \A:=C^\infty([a,b])$ as a subset of $\A_Y$. Hence, up to a (noncritical) restriction of $\A_Y$ to $\A$, the boundary problem with free boundary values determined by $\lambda$ on $Y$, entails finding the functions $u$ such that\footnote{The norm   can be either the  $L^\infty$ or the $H^1$ norm on $C^2([a,b])$.}
\begin{equation}\label{eqPuntoCriticoConNorma}
\underset{\|\hat{u}-u\|\to 0}{\lim}\frac{\S_\lambda[\hat{u}]-\S_\lambda[u]}{\|\hat{u}-u\|}=0.
\end{equation}
In Chapter 7 of van Brunt's   book \cite{Brunt},  the above problem is modified by
allowing  $\hat{u}$ to be defined on a different interval than $u$. To fit this new setting, $\A$ must give up its linear structure and norm, namely  $\A:=\cup_{x_0<x_1}C^\infty([x_0,x_1])$; according, $Y:=\R^2$. 
Despite this,   $\A$ keeps a rather obvious   metric structure $d_\A$. Moreover, with two real numbers $X_0$ and $X_1$  and a suitable function $\xi$, one can construct a  \emph{variation}  $\hat{u}$ of $u$,   whose $d_\A$--distance from $u$ is controlled by a parameter $\epsilon>0$.\par
First, use $X_0$ and $X_1$ to define  a new   interval $[\hat{x}_0,\hat{x}_1]$, where
\begin{equation}
\hat{x}_k=x_k+\epsilon X_k,\quad k=0,1,\nonumber
\end{equation}
and suppose, without loss of generality, that $x_0=\min\{x_0,\hat{x}_0\}$ and\linebreak $\hat{x}_1= \max\{x_1,\hat{x}_1\}$. 
Then, use $\xi\in C^\infty([x_0,\hat{x}_1])$ to construct the  {variation} 
\begin{equation}\label{eqNuovoUColCappello}
\hat{u}:=u^{ {\star}}+\epsilon\xi,
\end{equation}
of $u$, where	$u^{ {\star}}$ is the $2^\textrm{nd}$ order polynomial extension\footnote{To reduce the load of notations, we retain the same symbol  $u$ for the extension    $u^\star$ of $u$. }  of $u$ to the interval $[x_0,\hat{x}_1]$, i.e.,
\begin{align}
	u^{\star}(x)=\left\{\begin{array}{ll} 
u, & x\in[x_0,x_1],\\
	u(x_1)+(x-x_1)u^{\prime}(x_1)+\frac{(x-x_1)^2}{2}u^{\prime\prime}(x_1), & x\in(x_1,\hat{x}_1].
\end{array}\right.\nonumber
\end{align}
Now 
\begin{equation}
d_\A(u,\hat{u}):= ||u-\hat{u}||+|(x_0,u_0)-(\hat{x}_0,\hat{u}_0)|+|(x_1,u_1)-(\hat{x}_1,\hat{u}_1)|\nonumber
\end{equation}
is a well--defined  distance on $\A$, which allows to adapt  \REF{eqPuntoCriticoConNorma}  to the case when the domain of definition of $u$ can be altered:   the norm $\|\hat{u}-u\|$ has to be replaced by the distance $d(u,\hat{u})$. Take the  {variation}  $\hat{u}$    \REF{eqNuovoUColCappello}, and compute
\begin{equation}\label{ExvBruntIneq1}
d_\A(u,\hat{u})\leq ||u-\hat{u}||+ \epsilon X_0 +||u_0-\hat{u}_0||+ \epsilon X_1+||u_1-\hat{u}_1||\leq \epsilon(3\xi + X_0+X_1).
\end{equation}
Inequality \REF{ExvBruntIneq1} shows that
\begin{equation}\label{eqEquazioncinaBellina}
d_\A(u,\hat{u})=\O(\epsilon).
\end{equation}
In order to estimate  the numerator in \REF{eqPuntoCriticoConNorma}, compute\footnote{It is convenient  to write $\LL[u]$ instead of  $\LL(x,u(x),u'(x))$. }
\begin{align}
\S_{\lambda}[\hat{u}]-\S_{\lambda}[u]&=\int_{\hat{x}_0}^{\hat{x}_1}\LL[\hat{u}]\d x-\int_{x_0}^{x_1}\LL[{u}]\d x\label{Difer}\\
&=\int_{x_0+\epsilon X_0}^{x_1+\epsilon X_1}\LL[\hat{u}]\d x-\int_{x_0}^{x_1}\LL[{u}]\d x\nonumber\\
&=\int_{x_0}^{x_1} (\LL[\hat{u}]-\LL[{u}])\d x+ \int_{x_1}^{x_1+\epsilon X_1}\LL[\hat{u}]\d x - \int_{x_0}^{x_0+\epsilon X_0}\LL[\hat{u}]\d x.\nonumber
\end{align}
Equality \REF{Difer} shows that, with respect to the \virg{fixed domain case} \REF{eqPuntoCriticoConNorma},  the variation of $\S_{\lambda} $ in $u$ has two additional contributions due to the variations of the endpoints of the domain of $u$. The main advantage of the geometric approach  presented in Subsection \ref{CondizioniPerCurve} later on, is that such a distinction between the variations of $u$ and the variation of its domain, simply disappear. For the time being, \REF{Difer} can be just rewritten in a more suggestive form 
\begin{equation}\label{eqEquazioneGasante}
\S_{\lambda}[\hat{u}]-\S_{\lambda}[u]= \epsilon \frac{\delta \S_\lambda}{\delta \xi}[u] + \epsilon \frac{\delta \S_\lambda}{\delta X_1}[u]  {-}\epsilon \frac{\delta \S_\lambda}{\delta X_0}[u],\nonumber
\end{equation}
where 
\begin{eqnarray*}
\epsilon \frac{\delta \S_\lambda}{\delta \xi}[u]&=&\int_{x_0}^{x_1} (\LL[\hat{u}]-\LL[u])\d x\\ 
&=&\epsilon\left\{\left.\xi\frac{\partial \LL}{\partial u^{\prime}}[u] \right|^{x_1}_{x_0} + \int_{x_0}^{x_1} \xi\left( \frac{\partial \LL}{\partial u}- \frac{\d}{\d x} \frac{\partial \LL}{\partial u^{\prime}}\right)[u]\d x\right\} + \O(\epsilon^2),
\end{eqnarray*}
and 
\begin{equation}
\epsilon \frac{\delta \S_\lambda}{\delta X_k}[u]= \int_{x_k}^{x_k+\epsilon X_k} \LL[\hat{u}]\d x =  \epsilon X_k \LL(x_k,u(x_k),u^{\prime}(x_k)) + \O(\epsilon^2), \quad k=0,1,\nonumber
\end{equation}
where we used the fact that
$$
\left.\frac{\d}{\d t}\right|_{t=0}\int^{x_k+t}_{x_k}{\LL[u]}\d x=\LL(x,u(x_k),u^{\prime}(x_k))
$$
and 
 $\LL(x,\hat{u}(x_k),\hat{u}^{\prime}(x_k))-\LL(x,u(x_k),u^{\prime}(x_k))=\O(\epsilon^2)$, for $k=0,1$. \par
Now define real numbers $U_0, U_1$   by $\epsilon U_k=\hat{u}(\hat{x}_k)-u(x_k)$, for $k=0,1$, and compute
\begin{eqnarray} 
\epsilon U_k &=& (u+\epsilon\xi)(\hat{x}_k)-u(x_0)= u(x_k) + \epsilon X_k u^{\prime}(x_k) + \epsilon \xi(x_k) + \O(\epsilon^2) -u(x_k)\nonumber\\
&=& \epsilon \left( X_k u^{\prime}(x_k) + \xi(x_k) + \O(\epsilon) \right). \nonumber
\end{eqnarray}
This shows that
\begin{equation}
\xi(x_k)=U_k -X_k u^{\prime}(x_k) + \O(\epsilon),\quad k=0,1.\label{Ex1}
\end{equation}
In view of   \REF{Ex1}, 
   \REF{Difer} reads now
\begin{align*}
\S_{\lambda}[\hat{u}]-\S_{\lambda}[u]{=} &\epsilon \left\{ \left.\xi\frac{\partial \LL}{\partial u^{\prime}}[u] \right|^{x_1}_{x_0} + \int_{x_0}^{x_1} \xi\left( \frac{\partial \LL}{\partial u}- \frac{\d}{\d x} \frac{\partial \LL }{\partial u^{\prime}}\right)[u]\d x\right. \\
&+\sum_{k=0,1}(-1)^k\left[ -X_k\left(\LL-u^{\prime} \frac{\partial \LL}{\partial u^{\prime}}\right)(x_k,u(x_k),u^{\prime}(x_k))\right.\\
&\left. \left.- U_k\frac{\partial \LL}{\partial u^{\prime}}(x_k,u(x_k),u^{\prime}(x_k)) + \xi (x_k)\frac{\partial \LL }{\partial u^{\prime}}(x_k,u(x_k),u^{\prime}(x_k))
\right]\right\}\\ &+ \O(\epsilon^2).
\end{align*}
Equivalently,
\begin{align*}
\S_{\lambda}[\hat{u}]-\S_{\lambda}[u]= &\epsilon \left\{  \int_{x_0}^{x_1} \xi  \frac{\delta \LL }{\delta u}  \d x+\sum_{k=0,1}(-1)^k\right. \\
&\left.\cdot\left[-  X_k\left(\LL-u^{\prime} \frac{\partial \LL }{\partial u^{\prime}}\right)  - U_k\frac{\partial \LL }{\partial u^{\prime}}
\right](x_k,u(x_k),u^{\prime}(x_k))\right\}+ \O(\epsilon^2).
\end{align*}
Plugging the last expression into \REF{eqPuntoCriticoConNorma}, and taking into account \REF{eqEquazioncinaBellina}, we finally see that $u$ is a critical point for 
  $\S_{\lambda}$ if the above term in 
 $\epsilon$ vanishes for all variations of $u$, i.e., for all possible choices of $\xi$, $X_0$ and $X_1$.  In particular,  $u$ must satisfy the ($2^\textrm{nd}$ order) Euler--Lagrange equations, 
\begin{equation}\label{eqEX-EL}
  \frac{\delta \LL }{\delta u}(x,u(x),u^{\prime}(x),u^{\prime\prime}(x))=0, \nonumber
\end{equation}
on its domain of definition, plus a  ($1^\textrm{st}$ order) \emph{natural boundary condition} at the endpoints, 
\begin{equation}\label{eqCondizioneAlBordoNaturalis}
\sum_{k=0,1}{(-1)^{k}} \left[ (U_k-u^{\prime} X_k)\frac{\partial \LL}{\partial u^{\prime}}+X_k\LL\right](x_k,u(x_k),u^{\prime}(x_k))=0.
\end{equation}

Formula \REF{eqCondizioneAlBordoNaturalis} is used in van Brunt's book to prove Theorem  \ref{thTCallaBrunt} below, which answers the main question for one of the simplest  (though nontrivial) examples of a variational problem with free boundary values.
%
%
%
%
\begin{theorem}[Transversality conditions]\label{thTCallaBrunt}
Let $Y\subset\R^2$ be a closed and connected smooth domain, such that $\partial Y$ is the disjoint union of two curves $\gamma_0$ and $\gamma_1$, and $\R^2\smallsetminus Y$ is disconnected, and $\lambda$ be a $1^\mathrm{st}$ order Lagrangian.\footnote{Note that $\A_Y$ is made precisely by all curves lying in $Y$, such that one endpoint belongs to $\gamma_0$ and the other one to $\gamma_1$, without being tangent to any of them.} If an element $u\in\A_Y$ is a solution of the variational problem with free boundary values determined by $\lambda$, then
\begin{enumerate}
\item $u$ obeys the Euler--Lagrange equations on its domain of definition $[x_0,x_1]$;
\item $u$ fulfills the following \emph{transversality conditions}
 \begin{equation}\label{eqAllaBrunt}
\left[\left.\frac{\d y_{\gamma_k}}{\d\sigma}\right|_{\sigma=0} \frac{\partial \LL}{\partial u'}- \left. \frac{\d x_{\gamma_k}}{\d\sigma}\right|_{\sigma=0} \left(u' \frac{\partial \LL}{\partial u'}-\LL  \right) \right](x_k,u(x_x),u'(x_k))=0,\quad k=0,1,
\end{equation}
where $\gamma_k(\sigma)=(x_{\gamma_k}(\sigma),y_{\gamma_k}(\sigma))$ and $\gamma_k(0)=(x_k,u(x_k))$, $k=0,1$.
\end{enumerate}
\end{theorem}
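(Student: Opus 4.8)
The plan is to derive Theorem~\ref{thTCallaBrunt} directly from the general natural boundary condition \REF{eqCondizioneAlBordoNaturalis}, by converting the abstract data $(X_k,U_k)$ — the infinitesimal displacements of the endpoints in the ambient $\R^2$ — into the geometric data intrinsic to the boundary curves $\gamma_0$ and $\gamma_1$. The point is that in van Brunt's free--boundary set--up the endpoints are not allowed to move anywhere, but must stay on $\partial Y=\gamma_0\sqcup\gamma_1$; hence the admissible variations are exactly those for which the endpoint $(\hat x_k,\hat u(\hat x_k))$ remains on $\gamma_k$ to first order in $\epsilon$. This is the content of condition (2) in Definition~\ref{defAmmiissibile}, specialized to $n=1$, together with the non--tangency condition~(3), which guarantees that near each endpoint the admissible curve is still a graph $u=u(x)$ so that formula~\REF{eqCondizioneAlBordoNaturalis} applies.

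First I would fix, for each $k=0,1$, a local parametrization $\sigma\mapsto\gamma_k(\sigma)=(x_{\gamma_k}(\sigma),y_{\gamma_k}(\sigma))$ with $\gamma_k(0)=(x_k,u(x_k))$, and observe that a one--parameter family of admissible competitors sliding its $k$--th endpoint along $\gamma_k$ produces, to first order, the displacement
\begin{equation}
(\epsilon X_k,\epsilon U_k)=\epsilon\,\dot\sigma_k\left(\left.\frac{\d x_{\gamma_k}}{\d\sigma}\right|_{\sigma=0},\ \left.\frac{\d y_{\gamma_k}}{\d\sigma}\right|_{\sigma=0}\right)+\O(\epsilon^2),\nonumber
\end{equation}
for some velocity $\dot\sigma_k\in\R$ that can be chosen freely and independently for $k=0,1$ (here I also use that $\xi$ in the interior can be prescribed independently of the endpoint motion). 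Substituting $X_k=\dot\sigma_k\,\frac{\d x_{\gamma_k}}{\d\sigma}|_0$ and $U_k=\dot\sigma_k\,\frac{\d y_{\gamma_k}}{\d\sigma}|_0$ into \REF{eqCondizioneAlBordoNaturalis}, the two terms of the $k$--summand combine into
\begin{equation}
\dot\sigma_k\left[\left.\frac{\d y_{\gamma_k}}{\d\sigma}\right|_0\frac{\partial\LL}{\partial u'}-\left.\frac{\d x_{\gamma_k}}{\d\sigma}\right|_0\left(u'\frac{\partial\LL}{\partial u'}-\LL\right)\right](x_k,u(x_k),u'(x_k)),\nonumber
\end{equation}
which I have arranged so that the bracket is exactly the left--hand side of \REF{eqAllaBrunt}. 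Since \REF{eqCondizioneAlBordoNaturalis} must hold for every admissible variation, and since $\dot\sigma_0,\dot\sigma_1$ range independently over $\R$ while the interior term (the Euler--Lagrange part) is already taken care of by item~(1), each $k$--summand must vanish separately; as $\dot\sigma_k$ is arbitrary, the bracket itself must vanish, which is \REF{eqAllaBrunt}. The Euler--Lagrange statement~(1) is obtained in the standard way by restricting to variations that fix both endpoints ($X_k=U_k=0$), i.e.\ $\xi$ with compact support in the open interval, exactly as recalled before the statement.

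The step I expect to require the most care is the clean separation of the endpoint variations from the interior variation $\xi$, together with the verification that the admissible class is ``large enough'': one must check that for each prescribed pair $(\dot\sigma_0,\dot\sigma_1)$ there really is a genuine admissible family $\hat u\in\A_Y$ realizing it — this uses the non--tangency hypothesis~(3) to keep the competitors graphical near the endpoints, and the van Brunt construction \REF{eqNuovoUColCappello} (polynomial extension plus $\epsilon\xi$) to produce the family, after choosing $\xi$ so that $\xi(x_k)=U_k-X_k u'(x_k)$ in accordance with \REF{Ex1}. Once that is in place, Theorem~\ref{thTCallaBrunt} is just the bookkeeping above; I would also remark, as the introduction promises, that the same conclusion follows more conceptually from Lemma~\ref{lemFormulaTrasformante} and Corollary~\ref{corollariosintetizzante} by choosing a boundary--friendly change of coordinates straightening $\gamma_0$ and $\gamma_1$, but for this ``motivating example'' the elementary derivation from \REF{eqCondizioneAlBordoNaturalis} is the natural one to present.
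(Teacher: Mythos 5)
Your proposal is correct and follows exactly the route the paper intends: the paper's own ``proof'' is merely the citation to van Brunt, Chapter~7, but the text immediately preceding the theorem derives \REF{eqCondizioneAlBordoNaturalis} and states that this is the formula used there, and your argument---substituting $(X_k,U_k)=\dot\sigma_k\bigl(x_{\gamma_k}'(0),y_{\gamma_k}'(0)\bigr)$ and using the independence of $\dot\sigma_0,\dot\sigma_1$ and of the compactly supported interior variations---is precisely the missing bookkeeping. No gaps; your closing remark about the alternative derivation via Lemma~\ref{lemFormulaTrasformante} and a rectifying change of coordinates is exactly what Subsection~\ref{CondizioniPerCurve} later carries out.
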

\begin{proof}
 See \cite{Brunt}, Chapter 7.
\end{proof}
   
   In this Section we   observed   the lack of robustness  of   the functional--analytic approach: 
    the slightest change of settings destroyed the norm on the class of admissible functions, and a (in many respects, unnatural) distance appeared in its place, which worked well only after some lengthy    tricks.


 \section{Flag fibrations}\label{secFlagFibrations}

The main motivation for   flag fibrations is that    equations \REF{eqEL} and \REF{eqTC}  involve   $n$ and $n-1$ independent variables, respectively:   merging them into a unique equation requires a new formalism where the number of independent variables can take (at least) two values: $n$ and $n-1$. Recall the fundamental embedding $G^{r}_nY\subseteq G^1_n(G_n^{r-1}Y)$. It allows to regard   a point $\theta\in G^{r}_nY$ as an $n$--dimensional tangent plane\footnote{Called \emph{integral element} by Bryant\&Griffiths \cite{MR1083148}, or   \emph{$R$--plane} by Vinogradov and his school \cite{MR1857908,Sym}.} to $G_n^{r-1}Y$, and an element of   the fibered product
\begin{equation}
P:=G^r_n Y \times_{G^{r-1}_nY} G_{n-1}^1(G_n^{r-1}Y)
\end{equation}
as a pair    consisting of an $n$--dimensional and $(n-1)$--dimensional tangent plane to $G_n^{r-1}Y$ (at the same point). Define
\begin{equation}
F^r_{n,n-1}Y:=\left\{ (\theta_n,\theta_{n-1})\in P\mid \theta_n\supset \theta_{n-1}   \right\}.
\end{equation}
In many respects, the theory of flag fibrations parallels that of Grassmann fibrations; it is useful to review here some of its characteristic  features.

\begin{theorem}
Let $r>0$. Then the following results hold:
\begin{itemize}
\item   $F^r_{n,n-1}Y$ is a smooth manifold, called  the ($r^\textrm{th}$ order) \emph{flag fibration} of $Y$ (of signature $(n,n-1)$): it is   fibered    over the base $Y$, as well as    all lower--order flag fibrations, i.e., the manifolds $F^s_{n,n-1}Y$, with $s<r$.
\item The flag fibration $F^r_{n,n-1}Y$ is naturally fibered over the corresponding (i.e., with the same order $r$ and the same number of independent variables $n$) Grassmann fibration $G_n^rY$.
\item The image  of $F^r_{n,n-1}Y$ under the canonical projection over $G_{n-1}^1(G_n^{r-1}Y)$ is a smooth submanifold, naturally understood as $1^\mathrm{st}$ order nonlinear partial differential equation on $G_n^{r-1}Y$ in $n-1$ independent variables: the   \emph{equation of involutive $(n-1)$--planes} of  $G_n^{r-1}Y$.
\item The equation of involutive $(n-1)$--planes of  $G_n^{r}Y$ projects naturally over $F^r_{n,n-1}Y$.
\item The infinite--order flag fibration $F^\infty_{n,n-1}Y$, obtained as   the inverse limit of finite--order flag fibrations,    identifies with   the equation of involutive $(n-1)$--planes of  $G_n^{\infty}Y$, and, hence, it can be considered as an equation  on $G_n^{\infty}Y$.
\item The   infinite prolongation\footnote{See \cite{KrasVer} for a definition of infinitely prolonged equations.} $(F^\infty_{n,n-1}Y)_{(\infty)}$ of the   $1^\mathrm{st}$ order differential equation $F^\infty_{n,n-1}Y$, understood  as a pro--finite leaf space,\footnote{In the sense of Vinogradov's \virg{Secondary Calculus}: see, for instance, the introduction of   \cite{MR2504466}.}  is naturally interpreted as a space of infinite--order Cauchy data.
\item The infinitely--prolonged equation $(F^\infty_{n,n-1}Y)_{(\infty)}$ fits into a double filtration picture
\begin{equation}
\xymatrix{
&(F^\infty_{n,n-1}Y)_{(\infty)}\ar[dr]^{n} \ar[dl]_{p}& \\
G_n^\infty Y& & G_{n-1}^\infty Y.
}\nonumber
\end{equation}
  mimicking   the similar diagram in the (linear) theory of flag manifolds.
\end{itemize}

\end{theorem}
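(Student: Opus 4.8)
The plan is to bootstrap everything from the bundle structure of the Grassmann fibrations together with the elementary linear algebra of flags. Over $G^{r-1}_nY$ both $G^r_nY$ and $G^1_{n-1}(G^{r-1}_nY)$ are smooth fibre bundles, so the fibred product $P$ is a smooth manifold and its two projections are submersions. On $G^r_nY$ there is a tautological rank-$n$ vector bundle $\tau$ whose fibre at $\theta_n$ is the $n$-plane $\theta_n\subseteq TG^{r-1}_nY$ that the point $\theta_n$ \emph{is}; I would identify $F^r_{n,n-1}Y$ with the bundle $\P(\tau^\ast)\to G^r_nY$ of hyperplanes of $\tau$, a point over $\theta_n$ being exactly a flag $(\theta_n,\theta_{n-1})$ with $\theta_{n-1}\subset\theta_n$. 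This yields at once the smoothness of $F^r_{n,n-1}Y$, its natural fibration over $G^r_nY$, and---composing with the Grassmann tower $\rho^{r,s}$---over $Y$. For the projections $F^r_{n,n-1}Y\to F^s_{n,n-1}Y$ with $s<r$: the push-forward along $d\rho^{r-1,s-1}$ carries $n$- and $(n-1)$-planes over $G^{r-1}_nY$ to the corresponding planes over $G^{s-1}_nY$, preserving inclusion and the integral-element condition, so it descends to a map of flag fibrations compatible with the Grassmann projections; I would check it is a surjective submersion---locally the restriction to the flag subbundle of the product of the $G$-level submersions, transverse to the relevant fibres by the same dimension count as in the Grassmann case.

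Next, restrict the second projection $q\colon P\to G^1_{n-1}(G^{r-1}_nY)$, $(\theta_n,\theta_{n-1})\mapsto\theta_{n-1}$, to $F^r_{n,n-1}Y$; its image is the set $\I$ of $(n-1)$-planes contained in some integral $n$-element of the Cartan distribution $\C^{r-1}$ of $G^{r-1}_nY$. I would show $\I$ is a smooth submanifold by presenting it, inside $G^1_{n-1}(G^{r-1}_nY)$, as the common zero locus of the condition $\theta_{n-1}\subseteq\C^{r-1}$ together with the vanishing on $\theta_{n-1}$ of the vector-valued structure form measuring non-integrability of $\C^{r-1}$---finitely many smooth equations of locally constant rank, whose zero set is then a submanifold coinciding with $q(F^r_{n,n-1}Y)$. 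Viewing $G^1_{n-1}(G^{r-1}_nY)$ as the $1^\mathrm{st}$-order Grassmann fibration of $G^{r-1}_nY$ with $n-1$ independent variables makes $\I$ a $1^\mathrm{st}$-order PDE on $G^{r-1}_nY$, the equation of involutive $(n-1)$-planes. The analogous equation for $G^r_nY$ maps onto $F^r_{n,n-1}Y$: an involutive $(n-1)$-plane of $G^r_nY$ records, under $\rho^{r,r-1}$, both an involutive $(n-1)$-plane of $G^{r-1}_nY$ and the $R$-plane carrying the order-$r$ derivatives, i.e., a point of $\P(\tau^\ast)=F^r_{n,n-1}Y$; I would check this correspondence is smooth using the fibred-product description $G^r_nY\subseteq G^1_n(G^{r-1}_nY)$.

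Passing to the limit, $F^\infty_{n,n-1}Y:=\varprojlim F^r_{n,n-1}Y$ exists as a pro-smooth manifold because the bonding maps are surjective submersions, and taking $\varprojlim$ in the previous paragraph identifies it with the equation of involutive $(n-1)$-planes of $G^\infty_nY$ inside $G^1_{n-1}(G^\infty_nY)$, hence with an equation on $G^\infty_nY$. For the infinite prolongation $(F^\infty_{n,n-1}Y)_{(\infty)}$: iterated prolongation of a $1^\mathrm{st}$-order equation in $n-1$ variables builds, fibrewise, the complete formal Taylor datum of an $(n-1)$-dimensional solution, and since the solutions here are precisely prolonged hypersurfaces equipped with a transverse $R$-plane, a point of $(F^\infty_{n,n-1}Y)_{(\infty)}$ is exactly an infinite-order Cauchy datum---here I would invoke the standard infinite-prolongation formalism of the Vinogradov school to make \virg{pro-finite leaf space} precise. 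The two commuting projections onto $G^\infty_nY$ and $G^\infty_{n-1}Y$ are then obtained by keeping, respectively, the $n$-plane component (the $\infty$-jet of the ambient solution) and the $(n-1)$-plane component (the $\infty$-jet of the hypersurface), and commutativity and naturality are inherited formally from the finite-order projections.

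I expect the real difficulty to be the smoothness of $\I=q(F^r_{n,n-1}Y)$. One must (i) pin down the defining contact/structure equations and verify that their rank---equivalently, the dimension of the symbol of the involutive-plane equation---is locally constant, which forces a careful accounting of which jet-identification relations are already imposed at level $r-1$ and which are genuinely new at level $r$; and (ii) control the fibres of $q|_{F^r_{n,n-1}Y}$, i.e., whether a generic involutive $(n-1)$-plane extends to a unique $R$-plane or to a pencil, since this decides whether $q$ restricts to an embedding onto $\I$. Everything downstream---the inverse limit, the infinite prolongation, and the double filtration diagram---is then bookkeeping with limits and the standard secondary-calculus machinery.
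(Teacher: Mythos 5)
The paper does not actually prove this theorem: its ``proof'' is a one-line pointer to \cite{MorenoCauchy}, so there is no in-text argument to measure your proposal against. Judged on its own, your outline is a sensible reconstruction of the intended construction --- realizing $F^r_{n,n-1}Y$ as the hyperplane (projectivized dual tautological) bundle $\P(\tau^\ast)\to G^r_nY$ does give smoothness and the fibrations over $G^r_nY$, over $Y$, and over the lower-order flag fibrations essentially for free, and the inverse-limit and double-filtration statements are, as you say, bookkeeping once the finite-order picture is in place. But the two items you defer to the end are not peripheral difficulties; they are where the actual content of the theorem sits, and your sketch does not close them.

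Concretely: (i) your description of $\I=q(F^r_{n,n-1}Y)$ as the locus of $(n-1)$-planes lying in the Cartan distribution $\C^{r-1}$ and annihilating the structure (metasymplectic) form is incomplete --- you also need the $(n-1)$-plane to be \emph{horizontal}, i.e.\ to project injectively to the base of independent variables, since the Cartan distribution of $G^{r-1}_nY$ contains many isotropic subspaces that sit inside maximal integral elements of non-$R$ type and are therefore not contained in any $R$-plane; without this the claimed identification of the image with a zero locus of locally-constant rank is simply false, and the constancy of rank itself is asserted, not checked. (ii) The statement that a point of $(F^\infty_{n,n-1}Y)_{(\infty)}$ ``is exactly an infinite-order Cauchy datum'' is precisely the structural identification $n^{-1}(\Sigma)=J^\infty(N_\Sigma)$ of Theorem \ref{thStrutturale} (Theorem 9.1 of \cite{MorenoCauchy}), which the paper itself singles out as the non-straightforward part; invoking ``the standard infinite-prolongation formalism'' does not produce the normal bundle $N_\Sigma$ or the horizontal-jet structure on the fibres of $n$. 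Until (i) and (ii) are supplied, what you have is a correct plan rather than a proof.
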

\begin{proof}
 See \cite{MorenoCauchy}.
\end{proof}

  \begin{theorem}[\cite{MorenoCauchy}, Theorem 9.1]\label{thStrutturale}
 Let $L$ (resp., $\Sigma$) be a leaf (i.e., maximal integral submanifold with respect to the infinite--order contact distribution)  of $G_n^\infty Y$ (resp.,  $G^\infty_{n-1} Y$). Then   the following identifications   \begin{eqnarray}
 p^{-1}(L)& = & G_{n-1}^\infty(L)\nonumber\label{eqThStrutt1ok}\\
 n^{-1}(\Sigma) &=&J^\infty( {N}_{\Sigma})\label{eqThStrutt2ok}
\end{eqnarray}
hold,  where
$ {N}_{\Sigma}$ is a   pro--finite vector bundle called the \emph{infinite--order normal bundle}.\par Moreover, $p$ and $n$ are transverse one to another, in the sense that $ p^{-1}(L)$ (resp., $ n^{-1}(\Sigma)$) maps non degenerately onto $G^\infty_{n-1}Y$  (resp., $G^\infty_nY$).

\end{theorem}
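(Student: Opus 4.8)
The plan is to establish the two identifications locally, in a coordinate chart adapted to the leaf, and then to globalise them by the naturality of the flag construction. Recall that, locally, a leaf $L$ of $G^\infty_n Y$ is the infinite prolongation $j^\infty(\ell)$ of an $n$--dimensional submanifold $\ell\hookrightarrow Y$; the projection $\rho^{\infty,0}$ restricts to a diffeomorphism $L\cong\ell$, and at every $\theta\in L$ the infinite--order contact plane $\C_\theta$ coincides both with the (unique) R--plane at $\theta$ and with $T_\theta L$. Symmetrically $\Sigma=j^\infty(\sigma)$ for an $(n-1)$--dimensional $\sigma\hookrightarrow Y$, with $\C_\theta=T_\theta\Sigma$ for $\theta\in\Sigma$. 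Throughout one uses that at infinite order $\C$ is involutive (spanned by commuting total derivatives), so through each point there is a unique leaf containing every integral submanifold through that point.

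First I would prove $p^{-1}(L)=G^\infty_{n-1}(L)$. By the structural description of $F^\infty_{n,n-1}Y$ as the equation of involutive $(n-1)$--planes of $G^\infty_n Y$, a point of $(F^\infty_{n,n-1}Y)_{(\infty)}$ over $\theta_n\in G^\infty_n Y$ is exactly the $\infty$--jet at $\theta_n$ of a solution $S$ through $\theta_n$, i.e.\ of an $(n-1)$--dimensional submanifold of $G^\infty_n Y$ each of whose tangent planes is contained in an R--plane, hence in the corresponding contact plane. Thus $S$ is an integral submanifold of $\C$; as $L$ is the leaf of $\C$ through $\theta_n$, it follows that $S\subseteq L$. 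Conversely, every $(n-1)$--submanifold of $L$ has tangent planes inside $TL=\C|_L$ and so is such an $S$. Therefore $p^{-1}(\theta_n)$ is the set of $\infty$--jets of $(n-1)$--submanifolds of $L$ at $\theta_n$, and $p^{-1}(L)=G^\infty_{n-1}(L)$; the diffeomorphism $L\cong\ell$ makes this canonical once $L$ is fixed.

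Next I would prove $n^{-1}(\Sigma)=J^\infty(N_\Sigma)$, proceeding order by order. Over $\theta\in\Sigma$ the fixed datum is $\C_\theta=T_\theta\sigma$, and a first--order point of $F^r_{n,n-1}Y$ in $n^{-1}(\Sigma)$ is the choice of an $n$--plane $\theta_n\supset T_\theta\sigma$; fixing once and for all (locally) a hypersurface transverse to $\sigma$ turns the set of such choices into the fibre over $\theta$ of a vector bundle $N^{(1)}_\sigma\to\sigma$, the free parameter being a transverse derivative of the graph along $\sigma$. Passing from order $r-1$ to order $r$, the additional data are the newly appearing contact coordinates, which after the same trivialisation enter linearly, so the fibre of the prolonged equation over $\theta$ at order $r$ is the total space of a vector bundle $N^{(r)}_\sigma\to\sigma$, and $n^{-1}(\Sigma)$ truncated at order $r$ is a finite jet bundle assembled from the $N^{(s)}_\sigma$, $s\le r$. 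Taking the inverse limit of the $N^{(r)}_\sigma$ gives the pro--finite vector bundle $N_\Sigma$ over $\sigma\cong\Sigma$, and $n^{-1}(\Sigma)=J^\infty(N_\Sigma)$.

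It remains to check the non--degeneracy, i.e.\ the mutual transversality of $p$ and $n$. Under the identifications above, the restriction of $n$ to $p^{-1}(L)=G^\infty_{n-1}(L)$ is the map on $(n-1)$--jets induced by the immersion $\ell\hookrightarrow Y$, hence of maximal rank; symmetrically, the restriction of $p$ to $n^{-1}(\Sigma)=J^\infty(N_\Sigma)$ is of maximal rank. Both are verified in the adapted coordinates by exhibiting that the pull--backs to $(F^\infty_{n,n-1}Y)_{(\infty)}$ of the two contact structures, of $G^\infty_n Y$ and of $G^\infty_{n-1}Y$, are in direct sum modulo the fibre directions. I expect the real work to lie in the second identification: isolating the vector--bundle — rather than merely affine or projective — structure on the fibres of $n$, i.e.\ showing that once a transversal to $\sigma$ has been fixed the infinitely many successive transverse derivatives organise into a genuine pro--finite vector bundle, and that prolongation turns this fibre into $J^\infty(N_\Sigma)$ compatibly with the contact geometry. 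The first identification and the transversality statement are then routine consequences of the leaf--as--prolongation picture.
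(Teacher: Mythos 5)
The paper does not actually prove this theorem: it is imported verbatim from \cite{MorenoCauchy} (Theorem 9.1) and the ``proof'' given here is only the citation, so there is no in-text argument to compare yours against line by line. That said, your sketch follows what is clearly the intended route, and the first identification together with the transversality claim are handled correctly: granting the structural facts stated just above the theorem (that $(F^\infty_{n,n-1}Y)_{(\infty)}$ is the infinite prolongation of the equation of involutive $(n-1)$--planes, and that at infinite order the Cartan plane is $n$--dimensional, involutive, and equal to the tangent space of the leaf), the observation that any $(n-1)$--dimensional integral submanifold through $\theta_n$ must sit inside the unique leaf $L$ through $\theta_n$ immediately gives $p^{-1}(L)=G^\infty_{n-1}(L)$, and the transversality is then a fibrewise rank count in adapted coordinates, as you say.

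The one place where your proposal is genuinely incomplete --- and you have correctly located it yourself --- is the identification $n^{-1}(\Sigma)=J^\infty(N_\Sigma)$. Two things are asserted rather than established. First, the fibre of $n$ over $\theta\in\Sigma$ at each finite order is a priori only an affine space (the set of R--planes containing a fixed $(n-1)$--plane, and its higher--order analogues); your device of ``fixing once and for all (locally) a hypersurface transverse to $\sigma$'' produces only local linear trivialisations, and to obtain a well--defined pro--finite \emph{vector} bundle $N_\Sigma$ one must either check that the resulting transition maps are linear or, better, define $N_\Sigma$ intrinsically (e.g.\ out of the ordinary normal bundle $TY|_\sigma/T\sigma$ and its prolongations), which is what makes the object canonical and deserving of the name ``infinite--order normal bundle.'' Second, the claim that the full tower $n^{-1}(\Sigma)$ is the \emph{jet} space $J^\infty(N_\Sigma)$ --- i.e.\ that the higher--order Cauchy data decompose exactly into new transverse derivatives (new fibre coordinates of $N_\Sigma$) plus \emph{free} tangential derivatives of the lower ones, with the prolongation structure of the equation matching jet prolongation --- is the actual content of the statement and is not argued; it is what Corollary \ref{CorThEgfr} and the computation in Proposition \ref{propCalcolosaAlgebrica} later rely on. So the skeleton is right, but the second identification needs the explicit coordinate (or intrinsic) computation, not just the order--by--order parameter count.
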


Equality \REF{eqThStrutt2ok} is the less straightforward of the two, and plays a prominent role in the description of the   relative Euler map,  
which will be introduced in the next section.

  \section{Relative Euler operator and natural boundary conditions}\label{secRelativeEulerOperator}

In order to clarify the relationship between relative cohomology and variational problems with free boundary values, recall Definition \ref{defDefinizioneBordo}, and suppose   that $\lambda= {d}\lambda_0$, where $\lambda_0\in\Omega^{n-1}(G^r_nY)$ is such that 
\begin{equation}
\left.\lambda_0\right|_{\partial G_n^rY }=0.\nonumber
\end{equation}
 Then   \REF{eqIntvarvar} reads
 \begin{equation}\label{eqNullLagrang}
\S_\lambda[L]=\int_L j^rL^\ast \lambda=\int_L j^rL^\ast d\lambda_0=\int_{\partial L}\left.j^rL^\ast \lambda_0\right|_{\partial L}=0,
\end{equation}
since $\partial L\subset \partial Y$  according to Definition \ref{defAmmiissibile}. Indeed, $j^rL$ maps $\partial L$ into $\partial G_n^rY $ and, hence, the fact that   $\lambda_0$ vanishes on the latter implies that its pull--back $j^rL^\ast \lambda_0$ vanishes on the former.\par
\begin{lemma}\label{lemmaFondamentale}
 The action $\S_\lambda$ on $\A_Y$ is determined by the equivalence class of $\lambda$ modulo the subspace $d\Omega^{n-1}(G^r_nY, \partial G_n^rY )$ of $\Omega^n(G^r_nY)$.
\end{lemma}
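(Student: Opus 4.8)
The plan is to show directly that, for every admissible $L$, the value $\S_\lambda[L]$ does not change if $\lambda$ is replaced by $\lambda+d\mu$ with $\mu\in\Omega^{n-1}(G^r_nY,\partial G^r_nY)$; note this subspace indeed lands in $\Omega^n(G^r_nY)$ since $d$ raises degree by one. This is the same computation already carried out in \eqref{eqNullLagrang}, now performed for an arbitrary relative primitive $\mu$ rather than under the special hypothesis $\lambda=d\lambda_0$.

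First I would fix $L\in\A_Y$ and $\mu\in\Omega^{n-1}(G^r_nY,\partial G^r_nY)$, and use that pull--back commutes with the exterior differential together with Stokes' theorem (legitimate because $L$ is compact and oriented by Definition \ref{defAmmiissibile}(1)) to write
\begin{equation}\label{eqPianoStokes}
\int_L j^rL^\ast(d\mu)=\int_L d\bigl(j^rL^\ast\mu\bigr)=\int_{\partial L}\left.j^rL^\ast\mu\right|_{\partial L}.\nonumber
\end{equation}
Then the crux is to check that $\left.j^rL^\ast\mu\right|_{\partial L}=0$. For this I would use that $j^rL=G^r_n\iota_L$ covers $\iota_L$ through $\rho^{r,0}$, so for $x\in\partial L=L\cap\partial Y$ (Definition \ref{defAmmiissibile}(2)) one has $\rho^{r,0}(j^rL(x))=\iota_L(x)\in\partial Y$, hence $j^rL(x)\in(\rho^{r,0})^{-1}(\partial Y)=\partial G^r_nY$ by Definition \ref{defDefinizioneBordo}. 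Consequently the restriction of $j^rL$ to $\partial L$ factors as $\partial L\xrightarrow{\,g\,}\partial G^r_nY\xrightarrow{\,\iota\,}G^r_nY$, whence $\left.j^rL^\ast\mu\right|_{\partial L}=g^\ast(\iota^\ast\mu)=g^\ast 0=0$, because $\mu\in\ker\iota^\ast$ by the very definition of relative forms. Therefore $\int_L j^rL^\ast(d\mu)=0$.

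Combining the two observations, if $\lambda,\lambda'\in\Omega^n(G^r_nY)$ satisfy $\lambda-\lambda'\in d\Omega^{n-1}(G^r_nY,\partial G^r_nY)$, then $\S_\lambda[L]-\S_{\lambda'}[L]=\int_L j^rL^\ast(\lambda-\lambda')=0$ for every $L\in\A_Y$, i.e.\ $\S_\lambda$ and $\S_{\lambda'}$ coincide as real--valued functions on $\A_Y$, which is the assertion. I do not expect a genuine obstacle: the argument is Stokes' theorem together with the naturality of the correspondence $Y\longmapsto G^r_nY$; the only point requiring care is the factorization of $j^rL|_{\partial L}$ through the inclusion $\iota$, since it is precisely this — a consequence of functoriality of the jet lift and of admissibility condition (2) — that allows the hypothesis $\mu\in\ker\iota^\ast$ to be invoked.
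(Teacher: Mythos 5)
Your argument is correct and is essentially the paper's own proof: the paper simply declares the lemma ``a paraphrase of \eqref{eqNullLagrang}'', which is precisely the Stokes computation you carry out, including the key observation that $j^rL$ maps $\partial L$ into $\partial G_n^rY$ so that a relative form pulls back to zero on $\partial L$. You merely make explicit, for an arbitrary relative primitive $\mu$, what the paper records for the special case $\lambda=d\lambda_0$.
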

\begin{proof}
 A paraphrase of \REF{eqNullLagrang}.
\end{proof}
%
In order to simplify   further analysis, we shall work, from now on, in the context of infinite Grassmann fibrations and $\C$--spectral sequences; in particular, a Lagrangian  will be a horizontal $n$--form on $G^\infty_nY$, 
\begin{equation}
\lambda\in \Omega_h^n(G^\infty_nY),\nonumber
\end{equation}
where $\Omega_h(G^\infty_nY)$ is the quotient differential algebra of $\Omega(G^\infty_nY)$ with respect to the ideal of contact forms.  An expert in bicomplexes or $\C$--spectral sequences would say that the next  corollary is   the  \virg{horizontalization} of Lemma \ref{lemmaFondamentale} above.
\begin{corol}\label{corFondamentale}
 The action $\S_\lambda$ on $\A_Y$ is determined by the relative horizontal cohomology class   
 \begin{equation}
 [\lambda]_{\textrm{\normalfont rel}}\in  H^n_h(G^\infty_nY, \partial G_n^rY ):=\frac{\Omega_h^{n}(G^\infty_nY)}{d_h\Omega^{n-1}_{h}(G^\infty_nY,\partial G^\infty_nY)},\nonumber
\end{equation}
where $d_h$ is the horizontal differential.
\end{corol}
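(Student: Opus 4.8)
The plan is to deduce Corollary~\ref{corFondamentale} from Lemma~\ref{lemmaFondamentale} by passing to the infinite jet limit and then horizontalizing, i.e., by quotienting out the contact ideal. First I would recall that the action $\S_\lambda$, originally defined via pull--back along the finite--order lift $j^rL$, is insensitive to the order: for any admissible $L$ one has $j^\infty L^\ast(\rho^{\infty,r})^\ast\lambda = j^rL^\ast\lambda$, so $\S_\lambda$ is equally well computed on $G^\infty_nY$. Thus Lemma~\ref{lemmaFondamentale} upgrades verbatim to the statement that $\S_\lambda$ on $\A_Y$ depends only on the class of $\lambda$ modulo $d\,\Omega^{n-1}(G^\infty_nY,\partial G^\infty_nY)$ inside $\Omega^n(G^\infty_nY)$.

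Next I would invoke the standard structure of the $\C$--spectral sequence (equivalently, the variational bicomplex): every $n$--form on $G^\infty_nY$ is, modulo an exact form and modulo contact forms, represented by a horizontal $n$--form, and the de~Rham differential $d$ induces on $\Omega_h^{n-1}$ the horizontal differential $d_h$. Concretely, one uses the splitting $d = d_h + d_v$ on the bicomplex and the acyclicity (along the fibers of $G^\infty_nY \to Y$) that lets one replace a representative $\lambda + (\text{contact})$ by its horizontal component without changing the cohomology class. The only new point relative to the absolute case is to check that this replacement respects the \emph{relative} structure: a primitive $\lambda_0\in\Omega^{n-1}(G^\infty_nY,\partial G^\infty_nY)$ vanishing on $\partial G^\infty_nY$ has horizontal part $\bar\lambda_0$ again vanishing on $\partial G^\infty_nY$ (since $\iota^\ast$ commutes with the projection onto horizontal forms, $\partial G^\infty_nY$ being a lift of $\partial Y$ and hence invariant under the contact structure), and conversely $d$ of such a $\lambda_0$ horizontalizes to $d_h\bar\lambda_0$. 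Therefore the subspace $d\,\Omega^{n-1}(G^\infty_nY,\partial G^\infty_nY)$ maps precisely onto $d_h\,\Omega_h^{n-1}(G^\infty_nY,\partial G^\infty_nY)$ under horizontalization, and the quotient in Lemma~\ref{lemmaFondamentale} is carried isomorphically onto $H^n_h(G^\infty_nY,\partial G^\infty_nY)$.

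Combining the two steps, $\S_\lambda$ factors through $[\lambda]_{\mathrm{rel}}\in H^n_h(G^\infty_nY,\partial G^\infty_nY)$, which is the assertion. The main obstacle I anticipate is the bookkeeping in the second paragraph: one must be careful that horizontalization is well defined and compatible with the relative (boundary--vanishing) condition, i.e., that the two short exact sequences---the one defining relative forms via $\iota^\ast$ and the one defining horizontal forms via the contact ideal---are compatible on $G^\infty_nY$. This amounts to the observation that $\partial G^\infty_nY=(\rho^{\infty,0})^{-1}(\partial Y)$ is a union of fibers of $G^\infty_nY\to Y$, hence preserved by the Cartan distribution, so restriction to $\partial G^\infty_nY$ commutes with the passage to $\Omega_h$; granting this, the rest is the routine diagram chase sketched above.
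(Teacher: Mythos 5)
Your route is the same as the paper's: the text offers no proof of Corollary \ref{corFondamentale} beyond the remark that it is the \virg{horizontalization} of Lemma \ref{lemmaFondamentale}, and your two steps (pass to $G_n^\infty Y$, then quotient by the contact ideal, checking compatibility with the relative condition) are precisely an unpacking of that remark. The outline is correct, and you have correctly isolated the one delicate point, namely that restriction to $\partial G_n^\infty Y$ must commute with horizontalization.

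However, the justification you give for that point is wrong as stated. You argue that $\partial G_n^\infty Y=(\rho^{\infty,0})^{-1}(\partial Y)$ is a union of fibres of $G_n^\infty Y\to Y$ and is \virg{hence preserved by the Cartan distribution}. It is indeed a union of fibres, but the inference fails: the Cartan distribution is spanned by the total derivatives, which are transverse to the fibres, and $D_{x^n}$ is \emph{not} tangent to $\{x_n=0\}$; in fact admissible submanifolds are required to be nowhere tangent to $\partial Y$ (Definition \ref{defAmmiissibile}), so the Cartan planes at points over $\partial Y$ are never contained in the tangent space of $\partial G_n^\infty Y$. This non-invariance is exactly why $\partial G_n^\infty Y$ carries the nontrivial horizontal-jet structure of Corollary \ref{CorThEgfr}. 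The fact you actually need is weaker and true: the contact ideal of $G_n^\infty Y$ restricts into the contact ideal of $\partial G_n^\infty Y$ (locally, $\d u_I-u_{I+1_j}\d x^j$ pulls back to $\d u_I-\sum_{j<n}u_{I+1_j}\d x^j$, a contact form of $J_h^\infty(N_{G_{n-1}^\infty(\partial Y)})$, because $\d x^n$ pulls back to zero on $\{x_n=0\}$). This is what makes $\iota^\ast$ well defined on horizontal forms and shows that a primitive vanishing on $\partial G_n^\infty Y$ has relative horizontal class, which is all your diagram chase requires; with that substitution the argument goes through and coincides with the paper's intended proof.
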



Corollary \ref{corFondamentale} says precisely that $H^n_h(G^\infty_nY, \partial G_n^rY )$ is the space $L(Y,\partial Y)$ mentioned in Section \ref{secPreliminare}. The   space $K(Y,\partial Y)$ can be obtained  in a similar way,   using   relative forms, contact ideal, and cohomology:  we shall rather   use an approach based on total differential operators and Spencer cohomology,  as  in \cite{Mor2007}.  In the  same cohomological framework it will also appear    the   relative Euler map $\E^\mathrm{rel}_Y$, which allows to obtain the equation \REF{eqRelEL} out of the Lagrangian $\lambda$.\par 
The aim of this section is to prove that \REF{eqRelEL}  is indeed equivalent to the pair of equations \REF{eqEL}--\REF{eqTC} and, furthermore, that either the single equation  \REF{eqRelEL}, or the two coupled equations \REF{eqEL}--\REF{eqTC}, provide a (nontrivial) answer to the main question stated in the Introduction.
 The first result can be found  in \cite{Mor2007}, but its proof, which is a   consequence of Theorem \ref{thStrutturale}, was provided later  in \cite{MorenoCauchy}, and it is a   consequence of the following structural result, which dictates strong restrictions on the topology of the fibration $ \partial G_n^\infty Y\longrightarrow G^\infty_{n-1}(\partial Y)$. 
\begin{corol}[\cite{Mor2007}, Theorem 2]\label{CorThEgfr}
 Consider $G^\infty_{n-1}(\partial Y)$ as a submanifold of $G^\infty_{n-1}(Y)$ via the embedding $\partial Y\subseteq Y$. Then 
 \begin{equation}
 \partial G_n^\infty Y=J_h^\infty( {N}_{G^\infty_{n-1}(\partial Y)})\nonumber
\end{equation}
where $N$ is the infinite--order normal bundle (see  Theorem \ref{thStrutturale}), and $J_h$ means \virg{horizontal jet bundle}.\footnote{Roughly speaking, the analog of jet bundle where derivatives are replaced by total derivatives. See, e.g.,  \cite{MR1857908,MR2504466} for more details.}
\end{corol}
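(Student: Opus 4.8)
The plan is to transport the statement onto the flag fibration $F^\infty_{n,n-1}Y$ and read it off from Theorem~\ref{thStrutturale}, using the two projections $p\colon F^\infty_{n,n-1}Y\to G^\infty_nY$ and $n\colon F^\infty_{n,n-1}Y\to G^\infty_{n-1}Y$. Concretely, the object to be identified is the natural fibration $\partial G^\infty_nY\to G^\infty_{n-1}(\partial Y)$ alluded to just before the statement, which on the admissible (i.e. transverse to $\partial Y$) locus is
\[
\pi\colon\ \partial G^\infty_nY\longrightarrow G^\infty_{n-1}(\partial Y),\qquad j^\infty_yL_0\longmapsto j^\infty_y(L_0\cap\partial Y);
\]
one has to show that $\pi$ \emph{is} the bundle $J_h^\infty\bigl(N_{G^\infty_{n-1}(\partial Y)}\bigr)\to G^\infty_{n-1}(\partial Y)$, and the equality of manifolds in the statement follows.

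First I would note that $G^\infty_{n-1}(\partial Y)$, viewed inside $G^\infty_{n-1}(Y)$, is a union of leaves of the infinite--order contact distribution, namely of the prolongations $\Sigma=j^\infty S$ of the $(n-1)$--dimensional submanifolds $S\subseteq\partial Y$: a leaf through a point of $G^\infty_{n-1}(\partial Y)$ is the prolongation of such an $S$, hence stays in $G^\infty_{n-1}(\partial Y)$. Theorem~\ref{thStrutturale} then applies to each such $\Sigma$ and gives $n^{-1}(\Sigma)=J^\infty(N_\Sigma)$, the infinite jet bundle of the infinite--order normal bundle of $S$ in $Y$.

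Next I would match, leaf by leaf, the transverse part of $\partial G^\infty_nY$ with $n^{-1}\bigl(G^\infty_{n-1}(\partial Y)\bigr)$ through $p$. A flag $(\theta_n,\theta_{n-1})$ with $n(\theta_n,\theta_{n-1})=\sigma\in\Sigma$ is based at a point $y\in S\subseteq\partial Y$ and satisfies $\theta_n\supseteq\theta_{n-1}$, so $p(\theta_n,\theta_{n-1})\in\partial G^\infty_nY$ with $\pi\bigl(p(\theta_n,\theta_{n-1})\bigr)=\sigma$; conversely, a transverse $\theta_n=j^\infty_yL_0$ over $\Sigma$ determines the flag uniquely, since $\theta_{n-1}$ is forced to be the only $(n-1)$--subplane of $\theta_n$ tangent to $\partial Y$, i.e. $j^\infty_y(L_0\cap\partial Y)=\sigma$, and the higher--order data of $\theta_{n-1}$ along the leaf $j^\infty L_0$ is then fixed by the contact condition. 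Imposing transversality cuts $n^{-1}(\Sigma)=J^\infty(N_\Sigma)$ down to $J^\infty\bigl(N_{G^\infty_{n-1}(\partial Y)}\big|_\Sigma\bigr)$ --- the \virg{transverse fattenings of $S$ into $Y$}, a rank--one reduction of $N_\Sigma$ reflecting that one dependent variable has been used up by landing in $\partial Y$ --- and $p$ carries it onto the fibre of $\pi$ over $\Sigma$. Since all these identifications arise from the single map $p$ and the leaves $\Sigma$ foliate $G^\infty_{n-1}(\partial Y)$ compatibly with its Cartan distribution, the derivatives along a leaf that assemble $J^\infty\bigl(N_{G^\infty_{n-1}(\partial Y)}\big|_\Sigma\bigr)$ are exactly the total derivatives on $G^\infty_{n-1}(\partial Y)$; by the very definition of the horizontal jet bundle the leafwise bundles glue to $J_h^\infty\bigl(N_{G^\infty_{n-1}(\partial Y)}\bigr)$, whence $\partial G^\infty_nY=J_h^\infty\bigl(N_{G^\infty_{n-1}(\partial Y)}\bigr)$.

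The hard part will be this second step, in two intertwined guises. First there is a transversality caveat: $\partial G^\infty_nY$ as defined in Definition~\ref{defDefinizioneBordo} also contains $\infty$--jets of submanifolds tangent to $\partial Y$, for which $L_0\cap\partial Y$ is not an $(n-1)$--submanifold and which have no counterpart on the flag side; one must either work throughout on the admissible, transverse locus --- harmless here, since only such jets are ever met by elements of $\A_Y$ --- or dispose of the remaining jets separately. Second, and more substantively, one must verify that $p$ intertwines the contact distribution of $G^\infty_nY$, restricted to $\partial G^\infty_nY$, with that of $G^\infty_{n-1}(\partial Y)$: this is precisely what legitimises reading \virg{derivatives along $\Sigma$} as total derivatives in the gluing step, and here the transversality of $p$ and $n$ asserted in Theorem~\ref{thStrutturale} is the decisive tool. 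The remaining points --- that the transverse fattenings form the asserted rank--one reduction of $N_\Sigma$, and the pro--finite bookkeeping --- are routine once these two are settled.
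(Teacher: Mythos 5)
Your route---realizing $\partial G^\infty_nY\to G^\infty_{n-1}(\partial Y)$ inside the flag fibration via the projections $p$ and $n$, and deriving the identification leafwise from $n^{-1}(\Sigma)=J^\infty(N_\Sigma)$ in Theorem~\ref{thStrutturale}---is exactly the one the paper indicates, and the paper itself offers no proof beyond asserting that the Corollary is a consequence of Theorem~\ref{thStrutturale} (singling out equality \eqref{eqThStrutt2ok} as the operative one) and deferring all details to \cite{MorenoCauchy}. Your sketch is consistent with that derivation and correctly isolates the two points where the real work, carried out only in the cited reference, actually lies: the treatment of the non-transverse jets contained in $\partial G^\infty_nY$ as defined in Definition~\ref{defDefinizioneBordo}, and the compatibility of $p$ with the contact distributions that legitimises reading the leafwise jet bundles as the horizontal jet bundle $J_h^\infty(N_{G^\infty_{n-1}(\partial Y)})$.
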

It follows form Corollary \ref{CorThEgfr} that the relative (i.e., constructed with relative forms) $\C$--spectral sequence of $ \partial G_n^\infty Y$ is particularly simple (i.e., one--line\footnote{See \cite{MR1857908} for the meaning of \virg{one--line}}); in turn, this implies that 
 $K(Y,\partial Y)$ splits  into   the sum $K( Y)\oplus K(\partial Y)$ (the proof can be found in \cite{Mor2010}). Hence, equation \REF{eqRelEL}  splits into two equations: \REF{eqEL}  and \REF{eqTC}.\par
The second result has been stated in \cite{MorenoCauchy} without proof, which is provided by Lemma \ref{lemmaTappabuco} below.

\begin{remark}\label{remarkdiconnessione}
 Proposition \ref{propCalcolosaAlgebrica} below contains a general theoretical result concerning relative $\C$--spectral sequences, so that there is no need to   restrict ourselves to the case of one independent variable: in other words,   we let $Y$  to be of dimension $n+m$, where $m$ is arbitrary, i.e., locally,       to be fibered over an $n$--dimensional manifold $X$ with $m$--dimensional fiber (when needed, such a fibration is called $\pi$). Here we recall some terminology.\par
 $\kappa(Y)$ is  the module of vertical symmetries (denoted by $\varkappa$ in \cite{Sym,KrasVer}) of the infinite--order contact distribution on $G_n^\infty Y$, and   $\mathcal{D}$  is  the sub--algebra  of differential operators generated by total derivatives (the $\C$--differential operators, according to \cite{Sym,KrasVer}). Suppose now we work in a local chart (in particular, $\partial Y=\{x_n=0\}$ and  $\pi$ is trivial): in this case,     $D_0^{(j)}$ denotes  the projection on the $j^\textrm{th}$ component of the free $C^\infty( G_n^\infty Y ) $--module $C^\infty( G_n^\infty Y )^m=\kappa(Y)$, and $D_{I}^{(j)}\df D_{I}\circ D_0^{(j)}$ for all $j=1,\ldots,m$, and ${I}$ multi--index of length $n$, i.e., $I\in\N_0^n$. Moreover, $\mathcal{D}(C^\infty( G_n^\infty Y ) ,\Omega_h ^n( G_n^\infty Y ))$ identifies  with $\mathcal{D}(C^\infty( G_n^\infty Y ) ,C^\infty( G_n^\infty Y ) )$ by means of the horizontal volume form $\d^n\x$ , and $\mathcal{D}(C^\infty({\partial G_n^\infty Y }),\Omega_h ^{n-1}({\partial G_n^\infty Y }))$  with $\mathcal{D}(C^\infty({\partial G_n^\infty Y }),C^\infty({\partial G_n^\infty Y }))$ by means of the horizontal volume form $\d_n^{n-1}\x$ on ${\partial G_n^\infty Y }$. 
Accordingly, the formally adjoint modules (see \cite{MR2504466}) $ {\kappa^\dag}(Y)$ and ${\kappa^\dag}({\partial G_n^\infty Y })$ are identified with the dual module of $\kappa(Y)$ and ${\kappa}({\partial G_n^\infty Y })$, which are still free, with bases $\{D_0^{(j)}\}_{j=1,\ldots,m}$ and $\{D_0^{(j,\alpha)}\}_{j=1,\ldots,m, \alpha\in\N_0}$, respectively. \par
Recall that $D_I$ is the composition of total derivatives $D_{x^1}^{i_1}\circ D_{x^2}^{i_2}\circ\cdots\circ D_{x^n}^{i_n}$, with $I=(i_1,\ldots,i_n)$, and, by our own convention, the difference between the multi--index $I$ and an integer $\alpha\leq i_n$ is the multi--index $I-\alpha:=(i_1,\ldots,i_{n-1},i_n-\alpha)$.
\end{remark}

\begin{prop}[On the structure of $K(Y,\partial Y)$]\label{propCalcolosaAlgebrica}
Let $Y$ be as in Remark \ref{remarkdiconnessione}. Then 
 \begin{equation}\label{eqKYDY}
K(Y,\partial Y)=\frac{\mathcal{D}(\kappa(Y),\Omega_h ^n(G_n^\infty Y))}{\delta\left(\mathcal{D}(\kappa(Y),C^\infty( G_n^\infty Y ) )\otimes\Omega_h ^{n-1}(G_n^\infty Y ,{\partial G_n^\infty Y })\right)},
\end{equation}
where $\delta$ is the Spencer differential. Moreover, the cohomology class of the cocycle $\square=a^{I}_jD_{I}^{(j)} \in \mathcal{D}(\kappa(Y),\Omega_h ^n(G_n^\infty Y))$ 
 is identified with the pair $(\E(\square),\E^\partial(\square))$ $ \in {\kappa^\dag(Y)}\oplus{\kappa^\dag}({\partial G_n^\infty Y })$, where $ \E(\square)= (-1)^{|{I}|}D_{I}(a_j^{I})D_0^{(j)},$
%
%
%
%
 and
 \begin{equation}
 \E^\partial(\square)=\sum_{{I}\in\N_0^n}(-1)^{|{I}|-{i}_n}\sum_{j=1}^m\sum_{\alpha<i_n} (-1)^\alpha D_{{I}-i_n}(D_{x_n}^\alpha(a^{I})_{|{\partial G_n^\infty Y }}) D_0^{(j,{i}_n-\alpha-1)}    \label{eqBellaIdentificazione}
\end{equation}
\end{prop}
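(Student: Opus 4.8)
The plan is to compute the relative Spencer cohomology directly, exploiting the product structure $\partial G_n^\infty Y = J_h^\infty(N_{G_{n-1}^\infty(\partial Y)})$ from Corollary~\ref{CorThEgfr}, which makes the relative $\C$--spectral sequence one--line as recalled above. First I would establish \eqref{eqKYDY} itself: the denominator of $K(Y,\partial Y)$ must be the image, under the Spencer differential $\delta$, of $\C$--differential operators twisted by \emph{relative} horizontal $(n-1)$--forms, rather than by all horizontal $(n-1)$--forms as in the absolute case $K(Y)$. This follows formally from the construction of the relative $\C$--spectral sequence in \cite{Mor2007,Mor2010}: one applies the same Spencer/compatibility-complex machinery that computes $K(Y)$, but starting from the relative de~Rham complex $\Omega(G_n^\infty Y,\partial G_n^\infty Y)$ instead of $\Omega(G_n^\infty Y)$; the term $\Omega_h^{n-1}(G_n^\infty Y,\partial G_n^\infty Y)$ appears because only relative forms have a well-defined horizontalization into the kernel of $\iota^\ast$. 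The only subtlety is that $\delta$ still lands inside $\mathcal{D}(\kappa(Y),\Omega_h^n(G_n^\infty Y))$ (the numerator is \emph{not} made relative), since an $n$--form on an $n+m$ manifold fibered as in Remark~\ref{remarkdiconnessione} restricts to zero on $\partial G_n^\infty Y$ for dimensional reasons after horizontalization — this is exactly the phenomenon isolated in \eqref{eqNullLagrang}.

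Next I would extract the explicit representatives. Given a cocycle $\square=a_j^I D_I^{(j)}$, the absolute Euler component $\E(\square)=(-1)^{|I|}D_I(a_j^I)D_0^{(j)}$ is obtained by the standard integration-by-parts normal form: repeatedly apply $\delta$ of $D_{I'}^{(j)}\otimes (\text{horizontal } (n-1)\text{-form})$ to strip one total derivative at a time, which is legitimate in $K(Y)$ and therefore \emph{a fortiori} contributes here. The new content is the boundary component $\E^\partial(\square)$: when we integrate by parts in the $x_n$--direction we are no longer allowed to discard the boundary term, because the twisting $(n-1)$--forms available to us are constrained to vanish on $\partial G_n^\infty Y$. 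So the plan is to track, for each monomial $a^I D_{x_1}^{i_1}\cdots D_{x_n}^{i_n}$, the telescoping sum produced by $i_n$ successive integrations by parts in $x_n$: at the $\alpha$-th step one peels off a term $(-1)^\alpha D_{I-i_n}(D_{x_n}^\alpha(a^I))$ paired against $D_{x_n}^{i_n-\alpha-1}$ of the test form, which must be read off on the boundary — this is precisely the $D_0^{(j,i_n-\alpha-1)}$ slot in the basis of $\kappa({\partial G_n^\infty Y})$ described in Remark~\ref{remarkdiconnessione}, and the sign $(-1)^{|I|-i_n}$ collects the integrations by parts already spent in the tangential directions $x_1,\dots,x_{n-1}$. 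Summing over $\alpha<i_n$, over $j$, and over all multi--indices $I$ yields \eqref{eqBellaIdentificazione}. The identification of the cohomology class with the pair $(\E(\square),\E^\partial(\square))\in\kappa^\dag(Y)\oplus\kappa^\dag(\partial G_n^\infty Y)$ then amounts to checking that (i) two cocycles differing by a $\delta$-coboundary of a relative form produce the same pair, and (ii) every pair is realized — both of which follow from the one-line structure, since in a one-line spectral sequence the relevant cohomology is just the cokernel of $\delta$ and the normal form above is unique.

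I expect the main obstacle to be the bookkeeping of signs and of the boundary jet-variable indices, i.e.\ verifying that the telescoped boundary contributions assemble \emph{exactly} into the free module with basis $\{D_0^{(j,\alpha)}\}$ and not into some quotient of it — equivalently, that no further cancellations occur among the boundary terms once we pass to relative cohomology. This is where Corollary~\ref{CorThEgfr} is essential: because $\partial G_n^\infty Y$ is genuinely a horizontal jet bundle $J_h^\infty(N)$, its own ($\C$-)cohomology is one-line and the module $\kappa^\dag(\partial G_n^\infty Y)$ is free on the stated basis, so the boundary terms live in a space with no hidden relations; a careless argument that ignored this could over-identify classes. A secondary, more routine, difficulty is justifying the convergence/termination of the integration-by-parts procedure for arbitrary (finite-order but unbounded-in-principle) multi--indices $I$, which is handled exactly as in the absolute case by working order-by-order in the filtration by $|I|$.
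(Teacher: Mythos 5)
Your proposal is correct and follows essentially the same route as the paper's proof: identify \eqref{eqKYDY} from the relative $\C$--spectral sequence (using that $\Omega_h^n(G_n^\infty Y,\partial G_n^\infty Y)=\Omega_h^n(G_n^\infty Y)$, so only the coboundaries shrink), then integrate by parts, observing that tangential steps produce coboundaries while each normal step leaves a surviving boundary term in the slot $D_0^{(j,i_n-\alpha-1)}$ with exactly your signs. The only detail you leave implicit that the paper spells out is the mechanism behind ``normal coboundaries must factor through $x_n$'': it exhibits the basis $\{\d_i^{n-1}\x,\; x_n\d_n^{n-1}\x\}$ of relative horizontal $(n-1)$--forms and uses $[D_{1_n},x_n]=1$ to compute $\delta$ on the normal generator.
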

\begin{proof}
 By the definition of relative $\C$--spectral sequences  \cite{Mor2007},   the space $K(Y,\partial Y)$  is the   $n^\textrm{th}$  cohomology space of the subcomplex 
 \begin{equation}
\mathcal{D}(\kappa(Y),C^\infty( G_n^\infty Y ) )\otimes\Omega_h (G_n^\infty Y,{\partial G_n^\infty Y })\nonumber
\end{equation}
of $\mathcal{D}(\kappa(Y),\Omega_h(G_n^\infty Y) )$. Expression \REF{eqKYDY} is a consequence of the fact that   $\Omega_h ^n(G_n^\infty Y ,{\partial G_n^\infty Y })$ equals $\Omega_h ^n(G_n^\infty Y)$. 
In other words, $K(Y,\partial Y)$ has the same $n$--cocycles as $K(Y )$, but fewer $n$--coboundaries, which explains why the  $n^\textrm{th}$  cohomology of the subcomplex turns out to be quite larger than the cohomology of the entire complex: in turn, this explains the appearance of natural boundary conditions. For the sake of simplicity, we shall skip the index $j$.\par
We now prove   that the relative Spencer cohomology of $\square$ is identified with \REF{eqBellaIdentificazione}.
To this end, observe that the elements $\d_i^{n-1}\x$, for $i=1,\ldots,n-1$, together with $x_n\d_n^{n-1}\x $, form a basis for  $\Omega_h ^{n-1}(C^\infty( G_n^\infty Y ) ,{\partial G_n^\infty Y })$. Accordingly, elements of $\mathcal{D}(C^\infty( G_n^\infty Y ) ,\Omega_h ^{n-1}(C^\infty( G_n^\infty Y ) ,{\partial G_n^\infty Y }))$ can be obtained by summing up the elements  $\square^i\otimes\d_i^{n-1}\x $ and $\square^n\otimes x_n\d_n^{n-1}\x $, with $\square^i\in\mathcal{D}(C^\infty( G_n^\infty Y ) ,C^\infty( G_n^\infty Y ) )$ for $i=1,\ldots,n$.\par
We compute the Spencer differential $\delta(\square^i\otimes\d_i^{n-1}\x )=(-1)^{i-1}(D_{1_i}\circ\square^i)\otimes\d^n\x  $, and observe that  \begin{align*}
(\delta(\square^n\otimes x_n\d_n^{n-1}\x ))(f)&=  {d_h}(\square^n(f)x_n\d_n^{n-1}\x )\\
&= {d_h}(\square^n(f))\wedge(x_n\d_n^{n-1}\x )+\square^n(f) {d}(x_n\d_n^{n-1}\x )\\
&=D_{1_n}( \square^n(f))dx_n\wedge x_n\d_n^{n-1}\x +\square^n(f)dx_n\wedge\d_n^{n-1}\x \\
&=(-1)^{n-1}(x_nD_{1_n}\circ\square^n+\square^n)(f)\d^n\x,  
\end{align*}
for arbitrary $f\in C^\infty( G_n^\infty Y ) $, whence
\begin{align*}
\delta(\square^n\otimes x_n\d_n^{n-1}\x ) &= (-1)^{n-1}((x_nD_{1_n}+1)\circ\square^n)\otimes\d^n\x  \\
&= (-1)^{n-1}(D_{1_n}\circ x_n\circ \square^n)\otimes\d^n\x,  
\end{align*}
since $1=[D_{1_n},x_n]$.\par
It follows that $a\circ D_{1_i}\circ\square$ is cohomologous to $- D_{1_i}(a)\circ\square$ for all $i=1,\ldots,n-1$, whereas $a\circ D_{1_n}\circ\square=(- D_{1_n}(a)+D_{1_n}\circ a)\circ\square$ is not generally cohomologous to $- D_{1_n}(a)\circ\square$, since $D_{1_n}\circ a\circ\square$ is not a coboundary, unless $a$ factors through $x_n$.\par
Take now $\square=a^{I} D_{I}\in\mathcal{D}(C^\infty( G_n^\infty Y ) ,\Omega_h ^n)$, 
with ${I}\in\N_0^n$. Such an operator $\square$  is cohomologous to the operator
\begin{align*}
\square'&=(-1)^{|{I}|-{i}_n}D_{{I}-{i}_n }(a^{I})\circ D_{x_n}^{{i}_n}\\
&= (-1)^{|{I}|-{i}_n} (-D_{x_n}(D_{{I}-{i}_n }(a^{I}))+D_{x_n}\circ D_{{I}-{i}_n }(a^{I}))\circ D_{x_n}^{{i}_n-1}\\
&=(-1)^{|{I}|-({i}_n-1)} D_{{I}-({i}_n-1) }(a^{I})\circ D_{x_n}^{{i}_n-1}+(-1)^{|{I}|-{i}_n} D_{x_n}\circ D_{{I}-{i}_n }(a^{I})\circ D_{x_n}^{{i}_n-1}\\
&\ \vdots\quad{i}_n\ \textrm{iterations}\nonumber\\
&=(-1)^{|{I}|} D_{{I}}(a^{I})+(-1)^{|{I}|-{i}_n} D_{x_n}\circ D_{{I}-{i}_n }(a^{I})\circ D_{x_n}^{{i}_n-1}\\
&+(-1)^{|{I}|-({i}_n-1)} D_{x_n}\circ D_{{I}-({i}_n-1) }(a^{I})\circ D_{x_n}^{{i}_n-2}+\ldots,
\end{align*}
i.e., to
\begin{equation}
\square'=(-1)^{|{I}|} D_{{I}}(a^{I})+\sum_{0<\alpha\leq i_n} (-1)^{|{I}|-\alpha} D_{x_n}\circ D_{{I}-\alpha}(a^{I})\circ D_{x_n}^{\alpha-1},\nonumber
\end{equation}
which turns into   a coboundary if and only if the function $D_{{I}}(a^{I})$ is zero and all the functions $ D_{{I}-\alpha}(a^{I})$ factor through $x_n$, i.e., they vanish on ${\partial G_n^\infty Y }$.\par
This means that the cohomology class $[\square]$ is uniquely determined by\linebreak $(-1)^{|{I}|}D_{{I}}(a^{I})\in C^\infty( G_n^\infty Y ) $, i.e., by $\E(\square)$ and by the set of functions\linebreak $ (-1)^{|{I}|-\alpha}D_{{I}-\alpha}(a^{I})_{|{\partial G_n^\infty Y }}\in C^\infty({\partial G_n^\infty Y })$, with $\alpha=1,\ldots,{i}_n$. Notice that the latter ones can be rewritten as $D_{{I}-i_n}(D_{x_n}^{\alpha}(a^{I})_{|{\partial G_n^\infty Y }})$, with $\alpha=0,\ldots,{i}_n-1$, because the multi--index ${I}-{i}_n $ belongs actually to $\N_0^{n-1}$ and the first $n-1$ total derivatives are tangent to ${\partial G_n^\infty Y }$: hence the last set of functions represent the coordinates of the element $\E^\partial(\square)$, according to \REF{eqBellaIdentificazione}.
\end{proof}

\begin{prop}[On natural boundary conditions]\label{propCalcolosaAnalitica}
 Let $L$ be locally given by the graph of a function $u=u(\x)$, defined on a compact and connected subset $\Omega\subset\R^n$, such that $\partial\Omega$ has equation $x^n=0$. If $L$ is critical for $\S_\lambda$, then  the following equations hold on $\partial\Omega$:
 \begin{equation}
\sum_{|I|\leq r,\ i_n>\alpha} (-1)^{|I|-\alpha-1}\frac{\d^{|I|-i_{n}}}{\d x ^{I-i_{n}}}\left( \left. \frac{\d^{i_n-\alpha-1} }{\d x_n^{i_n-\alpha-1}}\left(\frac{\partial \LL}{\partial u_I}\right)\right|_{\partial\Omega} \right)=0,\quad \alpha=0,\ldots,r-1.\nonumber
\end{equation}
 
\end{prop}
\begin{proof}
 Let $r$ be the order of $\lambda$ and $I=(i_1,\ldots,i_n)$. Put
\begin{equation}
\LL^{i_1i_2\cdots i_n}:=\frac{\partial\LL}{\partial u_I}=\frac{\partial \LL}{\partial u_{x_1^{i_1}x_2^{i_2}\dots x_n^{i_n}}}.\nonumber
\end{equation}
From the well--known formula of elementary calculus
\begin{equation}\label{eqMagica}
fg^{(i)}=\left(  \sum_{s=0}^{i-1}f^{(s)}g^{(i-s-1)}\right)^\prime+(-1)^if^{(i)}g
\end{equation}
it follows that
\begin{eqnarray}
&&\int_\Omega{\sum_{i_1+\cdots+i_n\leq r}{\LL^{i_1i_2\cdots i_n}\eta_{x_1^{i_1}x_2^{i_2}\dots x_n^{i_n}}}\d^n\x}\label{eqIncrebibile_0}\\
&&=\int_\Omega{
\sum_{i_1+\cdots+i_n \leq r}{\left\{
 {\frac{\d}{\d x_1}\left(\sum_{s_1=0}^{i_1-1}{(-1)^{s_1}
\frac{\d^{s_1} \LL^{i_1i_2\cdots i_n}}{\d x_1^{s_1}}  \eta_{x_1^{i_1-s_1-1}x_2^{i_2}\dots x_n^{i_n}} }\right) } 
 \right.} 
}\label{eqIncrebibile_1}\\
&& \left.+ {
 {\left(
  (-1)^{i_1}\frac{\d^{i_1} \LL^{i_1i_2\cdots i_n}}{\d x_1^{i_1}} \eta_{x_2^{i_2}x_3^{i_3}\dots x_n^{i_n}
}
\right)}} \right\}\d^n\x\label{eqIncrebibile_2}
\end{eqnarray} 
Then, applying again \REF{eqMagica} to the term \REF{eqIncrebibile_2}, we obtain
\begin{eqnarray}
&\REF{eqIncrebibile_0} &=\REF{eqIncrebibile_1}\nonumber\\
&&+\int_\Omega 
\sum_{i_1+\cdots+i_n \leq r} \left\{
     \frac{\d}{\d x_2}\left(\sum_{s_2=0}^{i_2-1} (-1)^{s_2}
\frac{\d^{s_2}}{\d x_2^{s_2}}\right.\right.\nonumber\\
&&\cdot\left.\left.\left((-1)^{i_1}\frac{\d^{i_1} \LL^{i_1i_2\cdots i_n}}{\d x_1^{i_1}} \right) \eta_{x_2^{i_2-s_2-1}\dots x_n^{i_n}}  \right)   \right.\label{eqIncrebibile_3}\\
&&  \left.+ 
  (-1)^{i_2}\frac{\d^{i_2}}{\d x_2^{i_2}}\left((-1)^{i_1}\frac{\d^{i_1}\LL^{i_1i_2\cdots i_n}}{\d x_1^{i_1}} \right) \eta_{x_3^{i_3}x_4^{i_4}\dots x_n^{i_n}}
\right\} \d^n\x\label{eqIncrebibile_4}
\end{eqnarray} 
Again, by  \REF{eqMagica}, we develop term  \REF{eqIncrebibile_4}:
\begin{eqnarray} 
&\REF{eqIncrebibile_0} &=\REF{eqIncrebibile_1}+\REF{eqIncrebibile_3}+\cdots\label{eqIncrebibile_5}\\
&&+\int_\Omega \sum_{i_1+\cdots+i_n \leq r} \left\{  \frac{\d}{\d x_n}\left(\sum_{s_n=0}^{i_n-1} (-1)^{s_n}
\frac{\d^{s_n}}{\d x_n^{s_n}}\left((-1)^{i_{n-1}}\right.\right.\right.\nonumber\\
&&\cdot\left.\left.\left.\frac{\d^{i_{n-1}}}{\d x_{n-1}^{i_{n-1}}} \left(\dots \left((-1)^{i_1}\frac{\d^{i_1}\LL^{i_1i_2\cdots i_n}}{\d x_1^{i_1}}  \right) \dots \right)\right) \eta_{x_n^{i_n-s_n-1}} \right) \right.\label{eqIncrebibile_6}\\
&&+{{\left.  {(-1)^{i_n}\frac{\d^{i_n}}{\d x_n^{i_n}}\left(\dots \left((-1)^{i_1}\frac{\d^{i_1} \LL^{i_1i_2\cdots i_n}}{\d x_1^{i_1}}  \right)\dots \right) \eta } \right\}}\d^n\x.
}\label{eqIncrebibile_7}
\end{eqnarray} 
Since $\partial\Omega =\{x_n=0\}$, all terms appearing on line \REF{eqIncrebibile_5} disappear, being of the form
\begin{equation*}
\int_\Omega  \frac{\partial}{\partial x_i}(F)\d^n\x=\int_{\Omega}  d(F\d_i^{n-1}\x)=\int_{\partial\Omega} \left. F\d_i^{n-1}\x\right|_{\partial\Omega}=0,\quad \forall i=1,2,\ldots,n-1.
\end{equation*}
On the other hand, \REF{eqIncrebibile_7} is the Euler--Lagrange; it remains just \REF{eqIncrebibile_6}, i.e.,
 \begin{equation}\label{eqQUASIFinale}
\int_{\partial\Omega} \left.
\sum_{|I|\leq r}\sum_{s_n=0}^{i_n-1}(-1)^{|I|-i_n+s_n}\frac{\d^{|I|-i_{n}}}{\d x ^{I-i_{n}}}\left(  \frac{\d^{s_n}  \LL^{i_1i_2\cdots i_n}  }{\d x_n^{s_n}}  \right)\eta_{x_n^{i_n-s_n-1}}\right|_{\partial\Omega} \d_n^{n-1}\x.
\end{equation}
Since \REF{eqQUASIFinale}  has to vanish for all variations $\eta$,  all equations \REF{eqVERAMENTEFinale} must be satisfied.
%
%
\end{proof}

\begin{lemma}\label{lemmaTappabuco}
 Let $L\in\A_Y$ be a solution to the variational problem with free boundary values determined by $\lambda$ on $Y$. Then equation \REF{eqEL} holds on $L$ and equation   \REF{eqTC}   holds on $\partial L$.
\end{lemma}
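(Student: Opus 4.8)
The plan is to assemble the statement from the two preceding Propositions, using a boundary--adapted localization. Fix a point of $\partial L$; by condition (3) of Definition \ref{defAmmiissibile} ($L$ nowhere tangent to $\partial Y$) together with $\partial L=L\cap\partial Y$, one may choose coordinates $(\x,u)$ on $Y$ near that point in which $\partial Y=\{x^n=0\}$ and $L$ is the graph of a function $u=u(\x)$ over a compact connected domain $\Omega\subset\R^n$ with $\partial\Omega=\{x^n=0\}$; the hypotheses of Proposition \ref{propCalcolosaAnalitica} are then met. Since such a change of coordinates is boundary--friendly, Lemma \ref{lemFormulaTrasformante} guarantees that whatever we establish about $\E^{\mathrm{rel}}_Y(\lambda)$ in this chart transfers back, so working locally is legitimate; moreover transversality ensures that the admissibility constraints (1)--(3) put no restriction on the boundary jet of an infinitesimal variation $\eta$ of $u$, so that $\eta$ may be taken with compact support in $\overset{\circ}{\Omega}$ or, alternatively, with $\eta|_{\partial\Omega}$ and the normal derivatives $D_{x_n}^{\alpha}\eta|_{\partial\Omega}$, $\alpha=0,\dots,r-1$, prescribed independently.

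First I would treat \REF{eqEL}. Taking a one--parameter family $L_t\in\A_Y$ with $L_0=L$, locally the graph of $u_t$ with $\left.\frac{\d}{\d t}\right|_{t=0}u_t=\eta$ and $\eta$ supported in $\overset{\circ}{\Omega}$, criticality of $L$ for $\S_\lambda$ gives $\int_\Omega\sum_{|I|\le r}\frac{\partial\LL}{\partial u_I}\frac{\partial^{|I|}\eta}{\partial\x^I}\,\d^n\x=0$; integrating by parts and invoking the fundamental lemma of the calculus of variations yields $\frac{\delta\LL}{\delta u}=0$ on $\Omega$, which is the coordinate form of $\E_Y(\lambda)=0$ (the classical computation recalled in Section \ref{secPreliminare}, now carried out on $L$). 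For the boundary condition, I would feed the \emph{same} criticality hypothesis into Proposition \ref{propCalcolosaAnalitica}: after the iterated integrations by parts there, the bulk term is killed by \REF{eqEL}, the total $x^i$--divergences with $i<n$ integrate to zero because $\partial\Omega\subset\{x^n=0\}$, and the remaining boundary integral \REF{eqQUASIFinale}, being linear in the free quantities $D_{x_n}^{\alpha}\eta|_{\partial\Omega}$, must vanish for each $\alpha=0,\dots,r-1$ separately; this is the system in the statement of Proposition \ref{propCalcolosaAnalitica}.

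It remains to recognize that system as the coordinate expression of $\E^\partial_Y(\lambda)=0$, hence of \REF{eqTC} along $\partial L$. For this I would apply formula \REF{eqBellaIdentificazione} of Proposition \ref{propCalcolosaAlgebrica} to the $\C$--differential operator $\square=\sum_{|I|\le r}\frac{\partial\LL}{\partial u_I}D_I$ representing $\E^{\mathrm{rel}}_Y(\lambda)$: setting $a^I=\frac{\partial\LL}{\partial u_I}$ and reindexing by $\alpha\leftrightarrow i_n-s_n-1$, the coefficients $D_{I-i_n}\big(D_{x_n}^{\alpha}(a^I)|_{\partial G_n^\infty Y}\big)$ produced by Proposition \ref{propCalcolosaAlgebrica} are precisely those appearing in \REF{eqQUASIFinale}; thus the vanishing of \REF{eqQUASIFinale} for all $\eta$ is the statement $\E^\partial_Y(\lambda)=0$ on $\partial G_n^\infty Y$, and restricting along the lift of $L$ (whose boundary lands in $\partial G_n^\infty Y$, cf.\ the discussion after \REF{eqNullLagrang}) gives \REF{eqTC} on $\partial L$. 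Since $\partial L$ is compact and covered by boundary--adapted charts and the resulting equations are intrinsic, the local conclusions patch together.

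The step I expect to be the main obstacle is the reduction in the first paragraph: one must verify that a bona fide family of \emph{admissible} submanifolds --- free to slide along $\partial Y$ and to change domain, as in Section \ref{secEsempioMotivante} --- realizes, to first order, an arbitrary $r$--jet of $\eta$ along $\partial\Omega$, and that the attendant ``moving domain'' terms (the analogues of the extra contributions in \REF{Difer}) are either absorbed into the boundary integral \REF{eqQUASIFinale} or annihilated by the vanishing of the lower--order divergences. This is exactly the point at which transversality (Definition \ref{defAmmiissibile}(3)) and the pull--back formulation \REF{eqIntvarvar} do the real work, letting the relative cohomology class of Corollary \ref{corFondamentale} replace the delicate bookkeeping of the purely analytic approach.
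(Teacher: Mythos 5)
Your proposal is correct and follows essentially the same route as the paper's own proof: pass to boundary--adapted coordinates with $\partial Y=\{x^n=0\}$, obtain \REF{eqEL} from interior variations and the system \REF{eqVERAMENTEFinale} from Proposition \ref{propCalcolosaAnalitica}, and then recognize that system as $\E^\partial_{Y,\alpha}(\lambda)=0$ via \REF{eqBellaIdentificazione} of Proposition \ref{propCalcolosaAlgebrica}. The only point of divergence is in how the local computation is globalized: you appeal to the naturality statement of Lemma \ref{lemFormulaTrasformante}, whereas the paper stresses that the essential ingredient is Corollary \ref{CorThEgfr}, which makes the left--hand side of \REF{eqTC} a section of a module and hence renders the equation localizable --- the issue you correctly flag about realizing arbitrary boundary jets by admissible variations is left equally implicit in the paper.
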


Before providing a proof, it is convenient to cast a bridge between the approach based on   {total differential operators} to the space $K(Y)$, sketched in Remark  \ref{remarkdiconnessione}, and a perhaps more familiar one, based on \virg{$1$--contact, $n$--horizontal} $(n+1)$--forms, or forms \virg{of type $(1,n)$}. Namely,   \REF{eqEL} can be written down as
\begin{equation}\label{eqELconforme}
\E_Y(\lambda)=\frac{\delta \LL}{\delta u}\omega\wedge\d^n\x,
\end{equation}
where $\omega$ is the zero--order contact form, and  $\omega\wedge\d^n\x$     plays the role of the generator $D_0$  (see in Remark \ref{remarkdiconnessione}) of the module $K(Y)$. Equation \REF{eqELconforme} clarifies  the above   sentence \virg{\REF{eqEL}  holds on $L$}:  it means  that \REF{eqELconforme}, {pulled back to $L$ via $j^\infty L$}, vanishes. \par
Similarly, the results contained in Corollary \ref{CorThEgfr} and Proposition \ref{propCalcolosaAlgebrica} give a solid basis to the sentence  \virg{holds on $\partial L$}, since
\begin{equation}\label{eqTCconforme}
\E^\partial_Y(\lambda)=\E^\partial_{Y,\alpha}(\lambda)\omega^\alpha\wedge\d_{n-1}^n\x,
\end{equation}
where now the $\omega^\alpha$'s are the zero--order contact forms on $J_h^\infty( {N}_{G^\infty_{n-1}(\partial Y)})$, and $\omega^\alpha\wedge\d_{n-1}^n\x$ plays the role of the generators $D_0^{j,\alpha}$, where $j=1$  (see in Remark \ref{remarkdiconnessione}), of the module $K(\partial G_n^\infty Y)$. \par
According,  $\E^\partial_{Y,\alpha}(\lambda)$ can be obtained as the coefficient of  $D_0^{1,\alpha}$, in   \REF{eqBellaIdentificazione}, where $j =1$, and $a^I=\frac{\partial \LL}{\partial u_I}$.
%
%
%
 Again, the sentence \virg{\REF{eqTC}   holds on $\partial L$} means that \REF{eqTCconforme}, pulled back to $\partial L$ via $j_\infty\partial L$, vanishes (see also Theorem 11.1  in \cite{MorenoCauchy}).

\begin{proof}

 The first fact is obvious: if $L$ a solution of a variational problem with free boundary values, then $L\smallsetminus\partial L$ is a solution of the Euler--Lagrange equation determined by the same Lagrangian $\lambda$, i.e., equation \REF{eqEL} holds.\par
 We stress that, in order to prove the second fact, it is necessary to have the result on the structure of equation \REF{eqTC}   provided by Corollary \ref{CorThEgfr}. Namely,  \REF{eqTC} is localizable, in the sense that its left--hand side belong to a  module of sections, and hence it vanishes locally if and only if it vanishes globally. Then, we can  
 choose a coordinate system $(\x,u)$ such that $L$ is the graph of a function $u=u(\x)$ on $\Omega$ and $\partial Y=\{x_n=0\}$. 
 Since $L$ is critical, all equations \REF{eqVERAMENTEFinale} must hold true on $\partial \Omega$; on the other hand, the above discussions showed  that   equations  \REF{eqVERAMENTEFinale} are nothing but $\E^\partial_{Y,\alpha}=0$: hence,    \REF{eqTCconforme} vanishes, i.e.,  \REF{eqTC}  must be valid on $\partial L$. 
 
\end{proof}

For  readers   not interested in theoretical details, we collect the main result of the last two sections into a convenient (though redundant) Corollary.

\begin{corol}[A solution of the main problem]\label{corollariosintetizzante}
  Let $L\in\A_Y$ be a solution to the variational problem with free boundary values determined by $\lambda$ on $Y$. Then,   the natural boundary conditions
 $\E_Y^\partial(\lambda)=0$ are satisfied on $\partial L$. In local coordinates, $\E^\partial_Y(\lambda)=\E^\partial_{Y,\alpha}(\lambda)\omega^\alpha\wedge\d_{n-1}^n\x$, where:
 \begin{itemize}
\item[i)]  the $\omega^\alpha$'s are the zero--order contact forms on the infinite jet  of a suitable pro--finite vector bundle over $\partial Y$ which arises in the theory of flag fibrations over $Y$;
\item[ii)] if $\alpha$ is less than the order of the Lagrangian $\lambda$, and $L$ is locally the graph of a function $u=u(\x)$ on $\Omega\subseteq\R^n$, the component $\E^\partial_{Y,\alpha}(\lambda)$ is given by
  \begin{equation}\label{eqVERAMENTEFinale}
\E^\partial_{Y,\alpha}(\lambda)=\sum_{|I|\leq r,\ i_n>\alpha} (-1)^{|I|-\alpha-1}\frac{\d^{|I|-i_{n}}}{\d x ^{I-i_{n}}}\left( \left. \frac{\d^{i_n-\alpha-1} }{\d x_n^{i_n-\alpha-1}}\left(\frac{\partial \LL}{\partial u^I}\right)\right|_{\partial\Omega} \right).
\end{equation}
\end{itemize} 
\end{corol}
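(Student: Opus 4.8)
The plan is to obtain the Corollary by assembling results already in hand---Lemma~\ref{lemmaTappabuco}, Corollary~\ref{CorThEgfr}, and Proposition~\ref{propCalcolosaAlgebrica} (with Proposition~\ref{propCalcolosaAnalitica} as a cross--check)---so that the only real work is a reconciliation of index conventions. First I would quote Lemma~\ref{lemmaTappabuco}: if $L\in\A_Y$ is a solution of the free--boundary variational problem determined by $\lambda$, then $\E_Y(\lambda)=0$ on $L$ and, more to the point, the natural boundary conditions $\E_Y^\partial(\lambda)=0$ hold on $\partial L$. Since the Euler--Lagrange part is not what the Corollary is about, everything reduces to unwinding $\E_Y^\partial(\lambda)=0$ into the explicit local shape \eqref{eqVERAMENTEFinale}, and to justifying the contact forms $\omega^\alpha$ occurring in it.

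For item~(i) I would invoke Corollary~\ref{CorThEgfr}: the identification $\partial G_n^\infty Y=J_h^\infty(N_{G^\infty_{n-1}(\partial Y)})$ presents $\partial G_n^\infty Y$ as the horizontal infinite jet bundle of the pro--finite normal bundle $N_{G^\infty_{n-1}(\partial Y)}$ over $\partial Y$, a bundle which---via Theorem~\ref{thStrutturale}---is precisely the one produced by the flag fibration $F^\infty_{n,n-1}Y$. The zero--order contact forms $\omega^\alpha$ on this horizontal jet bundle are then canonically defined, and, as recalled in the discussion preceding the statement, the classes $\omega^\alpha\wedge\d_{n-1}^n\x$ are exactly the free generators $D_0^{(j,\alpha)}$ (with $j=1$, since here $m=1$) of the module carrying $K(\partial Y)$ in Proposition~\ref{propCalcolosaAlgebrica}.

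For item~(ii) I would specialize Proposition~\ref{propCalcolosaAlgebrica} with $a^I=\partial\LL/\partial u_I$ and $j=1$, and extract the coefficient of $D_0^{(1,\alpha)}$ from \eqref{eqBellaIdentificazione}. A term of the inner sum there, with running index $\alpha'<i_n$, feeds $D_0^{(1,\alpha)}$ precisely when $i_n-\alpha'-1=\alpha$, i.e. $\alpha'=i_n-\alpha-1$, which forces $i_n>\alpha$; substituting $\alpha'$ back turns the sign $(-1)^{|I|-i_n}(-1)^{\alpha'}$ into $(-1)^{|I|-\alpha-1}$, turns $D_{I-i_n}$ into $\frac{\d^{|I|-i_n}}{\d x^{I-i_n}}$, and turns $D_{x_n}^{\alpha'}(a^I)_{|\partial G_n^\infty Y}$ into $\frac{\d^{i_n-\alpha-1}}{\d x_n^{i_n-\alpha-1}}\bigl(\partial\LL/\partial u_I\bigr)\big|_{\partial\Omega}$; summing over $I$ with $|I|\le r$ and $i_n>\alpha$ reproduces \eqref{eqVERAMENTEFinale} verbatim. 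As an independent check I would note that the purely analytic computation of Proposition~\ref{propCalcolosaAnalitica} lands on the same expression: setting $s_n=i_n-\alpha-1$ in \eqref{eqQUASIFinale} and collecting the coefficient of $\eta_{x_n^\alpha}$, the fact that \eqref{eqQUASIFinale} vanishes for every admissible variation forces each such coefficient to vanish on $\partial\Omega$, and it equals the right--hand side of \eqref{eqVERAMENTEFinale}.

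Finally I would close the loop geometrically: by Corollary~\ref{CorThEgfr} the relative $\C$--spectral sequence of $\partial G_n^\infty Y$ is one--line, so $K(Y,\partial Y)=K(Y)\oplus K(\partial Y)$ and the equation $\E_Y^\partial(\lambda)=0$ is localizable (its left--hand side is a section of a module), whence it may be checked in the chart used above; there the pull--back of \eqref{eqTCconforme} along $j_\infty\partial L$ retains its nonzero horizontal--volume factor $\bigl(j_\infty\partial L\bigr)^\ast\d_{n-1}^n\x$, so $\E_Y^\partial(\lambda)=0$ on $\partial L$ is equivalent to the vanishing on $\partial\Omega$ of every component $\E^\partial_{Y,\alpha}(\lambda)$---which is what (i)--(ii) assert. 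I expect the only genuinely delicate step to be the index/sign matching in item~(ii): checking that the Spencer--cohomology representative \eqref{eqBellaIdentificazione} and the iterated--integration--by--parts boundary integrand \eqref{eqQUASIFinale} package the same data under the correspondence $D_0^{(j,\alpha)}\leftrightarrow\omega^\alpha\wedge\d_{n-1}^n\x$; everything else is transcription.
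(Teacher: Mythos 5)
Your proposal is correct and follows exactly the route the paper intends: the Corollary is explicitly presented there as a redundant restatement assembling Lemma~\ref{lemmaTappabuco}, Corollary~\ref{CorThEgfr}, and Proposition~\ref{propCalcolosaAlgebrica} (with $a^I=\partial\LL/\partial u_I$, $j=1$), and your index substitution $\alpha'=i_n-\alpha-1$ reproducing the sign $(-1)^{|I|-\alpha-1}$ matches the paper's own derivation of $\E^\partial_{Y,\alpha}(\lambda)$ as the coefficient of $D_0^{(1,\alpha)}$ in \eqref{eqBellaIdentificazione}. The cross--check against \eqref{eqQUASIFinale} likewise agrees with Proposition~\ref{propCalcolosaAnalitica}, so nothing is missing.
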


 \section{Applications}\label{secFinale}

Together, Lemma \ref{lemFormulaTrasformante} and Corollary \ref{corollariosintetizzante} provide a powerful tool for writing down concrete examples of  natural boundary conditions.
 Computations presented in this section will be simplified by   some \virg{tricks} based on multi--linear algebra and   total differentials (Remarks \ref{remFinale1} and \ref{remFinale2} below). 
\begin{remark}[Top differential forms]\label{remFinale1}
A brute--force attempt to change  variables in  a  multi--dimensional integral may lead to a meaningless formula
\begin{equation}\label{eqCheNonHaSenso}
\int f(\x)\d^n\x = \int f(\x(\t))\frac{\d^n\x}{\d^n\t}\d^n\t.
\end{equation}
Nonetheless, since the $C^\infty(\R^n)$--module $\Omega^n(\R^n)$  is freely generated by $\d^n\t$, any $n$--form can be identified with a function. In particular, $\d^n\t$ identifies with 1, and $\d^n\x$ with the Jacobian of the change of variables $\x=\x(\t)$, thus recovering the meaning of \REF{eqCheNonHaSenso}. From now on,  all $n$--forms will be identified with functions: hence, an expression like $\Xi(\omega)$, where $\Xi$ is a vector field and $\omega$ an $n$--form, is \emph{not} the Lie derivative of $\omega$, but the function $\Xi(f)$, where $f$ is uniquely defined by $\omega=f\d^n\t$. 
\end{remark}

\begin{remark}[Total differentials]\label{remFinale2}
 Formula \REF{eqCheNonHaSenso} can be adapted to variational integrals, just by replacing differentials by total differentials, namely
 \begin{align}
\int f(\x, u(\x), u_1(\x),\ldots, u_n(\x))\overline{\d}^n\x =\nonumber\\
 \int f(\x(\t, y),u(\t,y), u_1(\t, y, y_1,\ldots,y_n),\ldots, u_n(\t, y, y_1,\ldots,y_n))\frac{\overline{\d}^n\x}{\overline{\d}^n\t}\overline{\d}^n\t\nonumber\label{eqCheNonHaSensoOrizz}
\end{align}
 where now $\overline{\d}^n\x=\overline{d}x^1\wedge \cdots\wedge \overline{d}x^n$ (i.e., the operator $d_h$ used in Section \ref{secRelativeEulerOperator}). Recall that
 \begin{equation}
\overline{d}x=D_{t^i}(x)dt^i, \quad x=x(\t)\nonumber
\end{equation}
where $D_{t^i}=\partial_{t^i}+y_i\partial_u$  is the total derivative operator with respect to $t^i$. In this context, horizontal $n$--forms on $G_n^1Y$, i.e., the space with coordinates $(\t,y,y_1\ldots,y_n)$, are identified with functions on the same space, via the horizontal volume form $\overline{\d}^n\t$. Accordingly, $\overline{\d}^n\x$ is the \virg{total Jacobian} associated with the change of variables $(\x,u)=(\x(\t,y),u(\t,y))$.

\end{remark}

 \subsection{A $1^\textrm{st}$ order,  one--dimensional example}\label{CondizioniPerCurve}

Consider again the variational problem with free boundary values of Theorem \ref{thTCallaBrunt}, Section \ref{secEsempioMotivante}. Let  $\LL=\LL(t,y,y')$ be its Lagrangian, and recall that $\partial Y$ is the disjoint union of two curves in the $(t,y)$--plane. Then, if  $\gamma(\sigma)=(t_\gamma(\sigma), y_\gamma(\sigma))$ is one of them, a critical point $y=y(x)$ for $\S_\lambda$ must fulfill the natural boundary condition
%
%
%
%
%
%
%
\begin{equation}\label{eqEsercFin}
\left[y'_{\gamma}(0) \frac{\partial \LL}{\partial y'}-t'_{\gamma}(0) \left(y'\frac{\partial \LL}{\partial y'}-\LL\right)\right](t, y(t), y'(t))=0
\end{equation}
where $(t,y(t))=\gamma(0)$ (see \REF{eqAllaBrunt}).\par
%
Equation \REF{eqEsercFin} can be obtained in a transparent geometrical way, without introducing \emph{ad hoc} metrics on the set $\A$. Just use  a change of coordinates $(t,y)\stackrel{F}{\longmapsto} (x,u)$,
\begin{eqnarray*}
x&=&x(t,y)\\
u&=&u(t,y)
\end{eqnarray*}
 which \virg{rectifies} the curve $\gamma$, i.e., such that $F_\ast(\gamma)$ is, for instance, the $u$--axis of the $(x,u)$--plane. Then,  lift $F$   to  a contact transformation $(t,y,y')\stackrel{F}{\longmapsto} (x,u,u')$ of $G_1^1Y$, where 
\begin{equation}
u'=\frac{u_t+y'u_y}{x_t+y'x_y}.\label{eqSollevamento}
\end{equation}
It is a simple exercise to get \REF{eqSollevamento} (see, for instance, \cite{Sym}, Section 1.2); nevertheless, in view of the next generalization, we prefer to justify it, by using the total differential operator $\overline{d}$. Recall that 
\begin{equation}\label{eqCretina}
 \overline{d}f=D_t(f)\d t,
\end{equation}
 for $f=f(t,y,y')$, with $D_t=\partial_t+y'\partial_y$ being the total derivative operator in $t$. As announced in Remark \ref{remFinale2}, we shall identify horizontal one--forms on the $(t,y,y')$--space with functions: hence, \REF{eqSollevamento} above can be written as
\begin{equation}
u'=\frac{\overline{d}u}{\overline{d}x}=\frac{D_t(u)\d t}{D_t(x)\d t}.\label{eqSollevamentoPiuBella}
\end{equation}
It is worth noticing that the inverse transformation $F^{-1}$ is  the same as \REF{eqSollevamentoPiuBella}
\begin{equation}
y^\prime=\frac{\overline{d}y}{\overline{d}t}=\frac{D_x(y)\d x}{D_x(t)\d x}=\frac{y_x+u'y_u}{t_x+u't_u},\label{eqFormulettaChecivoleva}
\end{equation}
where now  the total derivative operator is taken with respect to $x$. 
%
%
%
It follows that 
\begin{equation}\label{eqExercisio2}
\frac{\partial y'}{\partial u'}=\frac{\partial }{\partial u'}\left(\frac{D_x(y) }{D_x(t) }\right).
\end{equation}
Finally, since  $\d t=\overline{d}t$ (see   \REF{eqCretina}),   the Lagrangian $\lambda$       reads
\begin{equation}
\lambda=\LL(t,y,y')\overline{d}t=\LL(t,y,y') D_x(t)\d x\nonumber
\end{equation}
in the $(x,u,u')$--space, 
where $D_x(t)$ plays the role of \virg{total Jacobian}  (Remark \ref{remFinale2}). In other words,   $(F^\ast)^{-1}(\lambda) =\widetilde{\LL}\d x$, where $\widetilde{\LL}=\widetilde{\LL}(x,u,u')$ is given by
%
%
%
\begin{equation}
\widetilde{\LL}=(F^\ast)^{-1}(\LL)D_x(t) =\LL(t(x,u),y(x,y),y'(x,u,u')) D_x(t)(x,u,u').\label{eqLtilde}
\end{equation}
Now everything is ready.   $(F^\ast)^{-1}(\lambda)$ determines a variational problem with free boundary values on $F(Y)$ and $\partial F(Y)=F(\partial Y)$, by construction, consists of two curves, one of which is the $u$--axis:   by Corollary \ref{corollariosintetizzante},  on such an axis  the natural boundary conditions  take a particularly simple form
\begin{equation}\label{eqEsempio1cambiocoordinate}
 \frac{\partial \widetilde{\LL}}{\partial u'} (0,u(0),u'(0))=0.\nonumber
\end{equation}
It remains to express \REF{eqEsempio1cambiocoordinate} in terms of the coordinates $(t,y,y')$, i.e.,   to apply Lemma  \ref{lemFormulaTrasformante}.
%
%
%
%
%
%
We compute
\begin{align}
 \frac{\partial \widetilde{\LL}}{\partial u'}&\stackrel{\REF{eqLtilde}}{=} \frac{\partial }{\partial u'}(\LL D_x(t))\nonumber\\
 &=\frac{\partial \LL}{\partial u'}  D_x(t)+\LL\frac{\partial D_x(t)}{\partial u'}\nonumber\\
 &\stackrel{\REF{eqExercisio2}}{=} \frac{\partial \LL}{\partial y'}   \frac{\partial }{\partial u'}\left(\frac{D_x(y) }{D_x(t) }\right) D_x(t)+\LL\frac{\partial D_x(t)}{\partial u'}\nonumber\\
 &= \frac{\partial \LL}{\partial y'}    \frac{D_x(t)\frac{\partial }{\partial u'}(D_x(y))-D_x(y)\frac{\partial }{\partial u'}(D_x(t))}{D_x(t)}  +\LL\frac{\partial D_x(t)}{\partial u'}\nonumber\\
 &=\frac{\partial \LL}{\partial y'} \left(  \frac{\partial }{\partial u'}(D_x(y)) - \frac{D_x(y) }{D_x(t) } \frac{\partial }{\partial u'}(D_x(t)) \right) +\LL\frac{\partial D_x(t)}{\partial u'}\nonumber\\
  &\stackrel{\REF{eqFormulettaChecivoleva}}{=}\frac{\partial \LL}{\partial y'} \left(  \frac{\partial }{\partial u'}(D_x(y)) - y' \frac{\partial }{\partial u'}(D_x(t)) \right) +\LL\frac{\partial D_x(t)}{\partial u'}.\label{refEsercSostFinal0}
\end{align}
It  suffices to observe that
\begin{eqnarray}
\frac{\partial D_x(t)}{\partial u'}&=&\frac{\partial( t_x+u^\prime t_u)}{\partial u'}=t_u,\label{refEsercSostFinal1}\\
\frac{\partial D_x(y)}{\partial u'}&=&\frac{\partial (y_x+u^\prime y_u)}{\partial u'}=y_u.\label{refEsercSostFinal2}
\end{eqnarray}
Substituting \REF{refEsercSostFinal1} and \REF{refEsercSostFinal1} into \REF{refEsercSostFinal0}, one gets
%
%
%
%
%
%
%
%
%
%
%
%
\begin{equation}\label{eqEsercizioQfinale}
 \frac{\partial  {\LL}}{\partial y'}\left(y_u-t_uy'\right)+\LL t_u =0,\nonumber
\end{equation}
which, evaluated at $(0,u(0),u^\prime(0))$, returns \REF{eqEsercFin}, since, by the choice of   $F$, %
\begin{eqnarray*}
t'_\gamma(0)&=&t_u (\gamma(0)),\\
y'_\gamma(0)&=&y_u (\gamma(0)).
\end{eqnarray*}

 \subsection{A $1^\textrm{st}$ order, multi--dimensional example}\label{CondizioniPerSupericie}

Now  we pass to an $n$--dimen\--sional example: as we shall see, the geometric methods used before generalize effortlessly to this case; an analogous generalization of the methods used in Section \ref{secEsempioMotivante}, i.e., defining a metric structure on the space of all functions defined on a compact and connected subset of $\R^n$, would introduce a lot of technical difficulties, obscuring the simple solution of the problem.\par
\begin{theorem}[Generalized transversality conditions]\label{thEgregio}
 Let $Y$ be a closed smooth domain, with nonempty (smooth) boundary, of $\R^{n+1}=(\t,y)$, with\linebreak $\t=(t^1,\ldots, t^n)$, and $\lambda$ be a $1^\mathrm{st}$ order Lagrangian, locally given by\linebreak  $\LL=\LL(\t,y,y_1,\ldots,y_n)$. If $y=y(\t)$ is a critical point for $\S_\lambda$, then,  for any point $\theta\in\partial Y$, the normal vector $\nu_\theta$ to the hypersurface $\partial Y $ must be orthogonal to $H(\t,y,y^1(\t),\ldots,y^n(\t))$, where $H$ is the $\R^{n+1}$--valued function on $G_n^1Y$ given by 
%
%
\begin{equation}
H:=\left( \frac{\partial \LL}{\partial y_1 },\ldots, \frac{\partial \LL}{\partial y_n}, \LL -    y_i \frac{\partial \LL}{\partial y_i }  \right), \nonumber
\end{equation}
and $\theta=(\t,y(\t))$.
\end{theorem}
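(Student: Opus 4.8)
The plan is to establish Theorem~\ref{thEgregio} by the device already used in Subsection~\ref{CondizioniPerCurve}: straighten the boundary, where the natural boundary conditions degenerate to their simplest form, and carry the conclusion back by naturality. Fix $\theta\in\partial\Gamma=\Gamma\cap\partial Y$, writing $\Gamma=\{y=y(\t)\}$ for the critical submanifold. Since $\partial Y$ is a smooth hypersurface and, by admissibility (Definition~\ref{defAmmiissibile}), $\Gamma$ is nowhere tangent to it, one may choose a boundary--friendly diffeomorphism $F\colon(\t,y)\longmapsto(\x,u)$, defined near $\theta$, with $F(\partial Y)=\{x^n=0\}$ and such that $F(\Gamma)$ is again the graph of some $u=u(\x)$; in particular $\partial_u$ is transverse to $F(\Gamma)$ at $F(\theta)$, which will be used below. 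Lift $F$ to a contact transformation of $G^1_nY$: generalising \REF{eqSollevamento}--\REF{eqFormulettaChecivoleva}, the old first derivatives become $y_k=(M^{-1})^k_a\,D_{x^a}(y)$ with $M^a_b\df D_{x^a}(t^b)=t^b_{x^a}+u_a\,t^b_u$ and $D_{x^a}(y)=y_{x^a}+u_a\,y_u$, while, by Remark~\ref{remFinale2} and \REF{eqLtilde}, $(F^\ast)^{-1}(\lambda)=\widetilde{\LL}\,\d^n\x$ with $\widetilde{\LL}=(F^\ast)^{-1}(\LL)\cdot J$ and $J\df\det\big(D_{x^a}(t^b)\big)$ the total Jacobian.

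In this chart the problem carries the boundary $\{x^n=0\}$ and a first--order Lagrangian, so by Corollary~\ref{corollariosintetizzante}---the case $r=1$, $\alpha=0$ of \REF{eqVERAMENTEFinale}---its only natural boundary condition is $\partial\widetilde{\LL}/\partial u_n=0$ on $\{x^n=0\}$, and by Lemma~\ref{lemFormulaTrasformante} this, read in the new chart, \emph{is} the natural boundary condition of the original problem. It remains to compute $\partial\widetilde{\LL}/\partial u_n$ and translate it back. Differentiating $\widetilde{\LL}=(F^\ast)^{-1}(\LL)\,J$ in $u_n$ and using $\partial_{u_n}M^a_b=\delta^a_n t^b_u$, $\partial_{u_n}D_{x^a}(y)=\delta^a_n y_u$, Jacobi's formula for $\partial\det$, and the standard rule for the differential of a matrix inverse, all the $M^{-1}$--factors reorganise and one is left with
\begin{equation}
\frac{\partial\widetilde{\LL}}{\partial u_n}=J\,(M^{-1})^l_n\left[\frac{\partial\LL}{\partial y_l}\big(y_u-t^b_u\,y_b\big)+\LL\,t^l_u\right],\nonumber
\end{equation}
with summation over the repeated indices $l,b$; for $n=1$ this is precisely the one--dimensional identity behind \REF{eqEsercFin}.

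Restrict this to $\Gamma$: there $y_l$ becomes $\partial y(\t)/\partial t^l$, and the entry $(M^{-1})^l_n$ becomes the chain--rule derivative along $\Gamma$ of the coordinate function $x^n=x^n(\t,y)$, i.e.\ $(M^{-1})^l_n=x^n_{t^l}+x^n_y\,y_l$. Set $q\df y_u-\sum_l y_l\,t^l_u$. Because $F^{-1}_\ast\partial_u=(t^1_u,\ldots,t^n_u,y_u)$ is tangent to $\partial Y$ at $\theta$ while the normal $\nu_\theta$ is the gradient $(x^n_{t^1},\ldots,x^n_{t^n},x^n_y)$ of the defining function $x^n$, one has $\sum_l x^n_{t^l}t^l_u+x^n_y\,y_u=0$; feeding this and the two substitutions into the displayed identity collapses the bracketed sum, so that $\partial\widetilde{\LL}/\partial u_n$ becomes $J\,q\,\langle\nu_\theta,\,H(\t,y,y^1(\t),\ldots,y^n(\t))\rangle$. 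As $J\neq0$ (a Jacobian) and $q\neq0$ (here $\Gamma$ is transverse both to $\partial Y$ and, by the choice of $F$, to the $\partial_u$--direction), the vanishing of $\partial\widetilde{\LL}/\partial u_n$ along $\partial\Gamma$ is equivalent to $\langle\nu_\theta,H\rangle=0$, i.e.\ $\nu_\theta\perp H$; for $n=1$ one recovers \REF{eqEsercFin}, hence Theorem~\ref{thTCallaBrunt}.

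The computational core---and the main obstacle---is the middle step: one must check that in pushing $\widetilde{\LL}=(F^\ast)^{-1}(\LL)\,J$ through $\partial/\partial u_n$ every occurrence of $M^{-1}$ collapses onto the single row $(M^{-1})^l_n$, so that the a~priori $n$--dimensional family of rectified boundary conditions degenerates to one scalar equation, as it must to match the single scalar relation $\nu_\theta\perp H$; and that the purely geometric fact \virg{$F^{-1}_\ast\partial_u$ is tangent to $\partial Y$} is exactly what cancels the stray terms $\sum_l x^n_{t^l}t^l_u$. A subsidiary but genuine issue is securing $q\neq0$: this is where admissibility of $\Gamma$ (Definition~\ref{defAmmiissibile}) is really used, permitting a choice of $F$ that simultaneously rectifies $\partial Y$ and keeps $\Gamma$ a graph.
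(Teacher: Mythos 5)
Your argument follows the same strategy as the paper's proof of Theorem \ref{thEgregio}: rectify $\partial Y$ to $\{x^n=0\}$ by a boundary--friendly $F$, invoke Corollary \ref{corollariosintetizzante} (case $r=1$, $\alpha=0$) together with Lemma \ref{lemFormulaTrasformante} to reduce everything to $\partial\widetilde{\LL}/\partial u_n=0$, and then compute that derivative in the original variables. Where you genuinely diverge is in the execution of the two key steps, and your choices are sound. For the derivative, the paper stays inside the calculus of horizontal forms, writing $y_i=\omega_i/\overline{\d}^{\,n}\t$ as a ratio of total--differential determinants and applying the quotient rule, which yields $\sum_i\frac{\partial \LL}{\partial y_i}\frac{\partial\omega_i}{\partial u_n}+\frac{\partial(\overline{\d}^{\,n}\t)}{\partial u_n}\bigl(\LL-y_i\frac{\partial \LL}{\partial y_i}\bigr)$; you instead use Jacobi's formula and the differential of $M^{-1}$, arriving at the equivalent factored form $J\,(M^{-1})^l_n[\cdots]$ (I checked this agrees with the paper's expression, and with \REF{eqEsercFin} for $n=1$). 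For the identification of the normal, the paper row--reduces the determinants $\partial\omega_i/\partial u_n$ and recognizes its $\nu$ as the vector of signed minors of the tangent frame $T_1,\dots,T_{n-1},T$ of $\partial Y$, i.e.\ a cross product; you identify the normal as the gradient of the defining function via $(M^{-1})^l_n=D_{t^l}(x^n)=x^n_{t^l}+x^n_y y_l$ and the tangency relation $\sum_l x^n_{t^l}t^l_u+x^n_y y_u=0$. Your route buys two things the paper leaves implicit: the nonzero scalar factor $Jq$ is isolated and $q\neq 0$ is justified through admissibility, so the passage from $\partial\widetilde{\LL}/\partial u_n=0$ to the orthogonality statement is actually licensed. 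One caveat you should record: carrying out your collapse literally, the term $\LL\,t^l_u$ combines with the tangency relation to give $-x^n_y\,q\,\LL$, so the bracket closes on $x^n_y\bigl(y_i\frac{\partial \LL}{\partial y_i}-\LL\bigr)$; that is, you obtain $\partial\widetilde{\LL}/\partial u_n=Jq\,\langle\nu_\theta,H'\rangle$ with $H'=\bigl(\frac{\partial \LL}{\partial y_1},\dots,\frac{\partial \LL}{\partial y_n},\,y_i\frac{\partial \LL}{\partial y_i}-\LL\bigr)$, whose last component is opposite in sign to the $H$ displayed in the statement. This is not a flaw of your argument but an inconsistency already present in the source: the displayed $H$ gives $+1$, not $-1$, in the last slot for the area Lagrangian, contradicting Subsection \ref{ProblemaSimpatico}, and $\nu_\theta\perp H$ with the true normal does not reproduce \REF{eqEsercFin} for $n=1$, whereas $\nu_\theta\perp H'$ does. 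State explicitly that the condition you derive is orthogonality to $H'$, and your proof is complete and correct.
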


\begin{proof}
 Choose a change of coordinates    $(\t,y)\stackrel{F}{\longmapsto} (\x,u)$,
\begin{eqnarray*}
\x&=&\x(\t,y),\\
u&=&u(\t,y),
\end{eqnarray*}
such that $F(\partial Y)$ has equation $x^n=0$. In analogy with \REF{eqFormulettaChecivoleva},
\begin{equation}
y_i=\frac{\overline{d}y}{\overline{d}t^i}=\frac{\overline{d}y\wedge \overline{d}_i^{n-1}\t}{\overline{d}t^i\wedge \overline{d}_i^{n-1}\t}=   \frac{\omega_i}{ \overline{d}^{n}\t},
\label{eqSollevamentoPiuBellaN}
\end{equation}
where 
\begin{equation}
\omega_i:=\overline{d}t^1\wedge\cdots\wedge \overline{d}t^{i-1}\wedge \overline{d}y\wedge \overline{d}t^{i+1}\wedge\cdots \overline{d}t^n,\nonumber
\end{equation}
and we use again the convention that horizontal $n$--forms are identified with functions via the (horizontal) volume form $ \overline{d}x^n$  introduced in Remark \ref{remFinale2}. Developing all total differentials appearing in \REF{eqSollevamentoPiuBellaN}, one recovers the familiar formula for the lifting of $F$, as it can be found, e.g. in  \cite{Sym}, Section 1.2.\par
In analogy with \REF{eqExercisio2},
\begin{equation}\label{eqExercisio2N}
\frac{\partial y_i}{\partial u_n}=\frac{\partial }{\partial u_n}\left( \frac{\omega_i}{ \overline{d}^{n}\t}\right),\quad i=1,2,\ldots,n.\nonumber
\end{equation}
Finally,  analogously to \REF{eqLtilde}, we obtain
  $(F^\ast)^{-1}(\lambda) =\widetilde{\LL}\d^n \x$, where\linebreak $\widetilde{\LL}=\widetilde{\LL}(\x,u,u_1,\ldots,u_n)$ is given by
%
%
\begin{equation}
\widetilde{\LL}=(F^\ast)^{-1}(\LL) \overline{d}^{n}\t   \label{eqLtildeN}
\end{equation}
where, as explained by Remark \ref{remFinale2}, $\overline{d}^{n}\t=\frac{ \overline{d}^{n}\t}{ \overline{d}^{n}\x}$ is just an unconventional  way to write down the \virg{total Jacobian} of $F$.\par
Again, $(F^\ast)^{-1}(\lambda)$ determines a variational problem with free boundary values on $F(Y)$ and $\partial F(Y)=F(\partial Y)$, by construction, is the hyperplane $x^n=0$:   by Corollary \ref{corollariosintetizzante},  on such a  hyperplane  the natural boundary conditions  read
\begin{equation}\label{eqEsempio1cambiocoordinateN}
 \frac{\partial \widetilde{\LL}}{\partial u_n} (x^1,\ldots,x^{n-1},0,u(x^1,\ldots,x^{n-1},0),\ldots, u_n(x^1,\ldots,x^{n-1},0))=0.
\end{equation}
Finally, let us write down \REF{eqEsempio1cambiocoordinateN} in terms of the coordinates $(\t,y,y_1,\ldots, y_n)$. 
%
%
%
%
%
%
We compute
\begin{align}
 \frac{\partial \widetilde{\LL}}{\partial u_n}&\stackrel{\REF{eqLtildeN}}{=} \frac{\partial }{\partial u_n}\left(\LL { \overline{d}^{n}\t}\right)\nonumber\\
 &=\frac{\partial \LL}{\partial u_n} { \overline{d}^{n}\t}+\LL\frac{\partial (\overline{d}^{n}\t) }{\partial u_n } \nonumber\\
 &\stackrel{\REF{eqExercisio2}}{=} \frac{\partial \LL}{\partial y_i}   \frac{\partial }{\partial u_n}\left( \frac{\omega_i}{ \overline{d}^{n}\t}\right)  { \overline{d}^{n}\t}  +\LL\frac{\partial (\overline{d}^{n}\t) }{\partial u_n } \nonumber\\
 &= \frac{\partial \LL}{\partial y_i }    \frac{\overline{d}^{n}\t \frac{\partial }{\partial u_n}(\omega_i)-\omega_i\frac{\partial }{\partial u_n }(\overline{d}^{n}\t)}{\overline{d}^{n}\t} +\LL\frac{\partial (\overline{d}^{n}\t) }{\partial u_n } \nonumber\\
 &=\frac{\partial \LL}{\partial y_i } \left(  \frac{\partial }{\partial u_n }(\omega_i) - \frac{\omega_i }{\overline{d}^{n}\t } \frac{\partial }{\partial u_n }(\overline{d}^{n}\t) \right) +\LL\frac{\partial (\overline{d}^{n}\t) }{\partial u_n } \nonumber\\
  &\stackrel{\REF{eqFormulettaChecivoleva}}{=}\frac{\partial \LL}{\partial y_i } \left(  \frac{\partial \omega_i}{\partial u_n } - y_i  \frac{\partial(\overline{d}^{n}\t) }{\partial u_n }  \right) +\LL\frac{\partial (\overline{d}^{n}\t) }{\partial u_n } .\label{refEsercSostFinal0N}
\end{align}
We denote 
\begin{equation}
\nu:=\left( \frac{\partial \omega_1}{\partial u_n },\ldots, \frac{\partial \omega_n}{\partial u_n }, \frac{\partial (\overline{d}^{n}\t) }{\partial u_n }\right)\in\R^{n-1}.\nonumber
\end{equation}

Then, \REF{refEsercSostFinal0N} coincides with $\nu\cdot H$.\par
It remains to show that $\nu$ is indeed the normal vector to $F(\partial Y)$.  To this end, it is convenient to pass to the determinantal  notation for $n$--forms. Namely, 
\begin{align}
 \omega_i=\left\|\begin{array}{ccccc}t_{x_1}^1+u_1 t_u^1 & \cdots & y_{x_1}+u_1y_u & \cdots & t^n_{x_1}+u_1t^n_u \\t_{x_2}^1+u_2 t_u^1 & \cdots & y_{x_2}+u_2y_u & \cdots & t^n_{x_2}+u_2t^n_u \\\vdots & \vdots & \vdots & \vdots & \vdots \\t_{x_{n-1}}^1+u_{n-1} t_u^1 & \cdots & y_{x_{n-1}}+u_{n-1}y_u & \cdots & t^n_{x_{n-1}}+u_{n-1}t^n_u \\t_{x_n}+u_nt_u^1 & \cdots & y_{x_n}+u_ny_u & \cdots & t^n_{x_{n}}+u_{n}t^n_u\end{array}\right\|\nonumber
\end{align}
contains $u_n$ only in the last line: hence, 
\begin{align}
\frac{\partial \omega_i}{\partial u_n }=\left\|\begin{array}{ccccc}t_{x_1}^1+u_1 t_u^1 & \cdots & y_{x_1}+u_1y_u & \cdots & t^n_{x_1}+u_1t^n_u \\t_{x_2}^1+u_2 t_u^1 & \cdots & y_{x_2}+u_2y_u & \cdots & t^n_{x_2}+u_2t^n_u \\\vdots & \vdots & \vdots & \vdots & \vdots \\t_{x_{n-1}}^1+u_{n-1} t_u^1 & \cdots & y_{x_{n-1}}+u_{n-1}y_u & \cdots & t^n_{x_{n-1}}+u_{n-1}t^n_u \\ t_u^1 & \cdots &  y_u & \cdots &  t^n_u\end{array}\right\|.\nonumber
\end{align}
Subtracting from the $j^\textrm{th}$ row   the $n^\textrm{th}$ row multiplied by $u_j$, for all $j=1,\ldots, n-1$, the determinant does not change, i.e., 
\begin{align}\label{refEsercSostFinal1M}
\frac{\partial \omega_i}{\partial u_n }=\left\|\begin{array}{ccccc}t_{x_1}^1  & \cdots & y_{x_1}  & \cdots & t^n_{x_1}  \\t_{x_2}^1  & \cdots & y_{x_2}  & \cdots & t^n_{x_2}  \\\vdots & \vdots & \vdots & \vdots & \vdots \\t_{x_{n-1}}^1  & \cdots & y_{x_{n-1}}  & \cdots & t^n_{x_{n-1}}  \\ t_u^1 & \cdots &  y_u & \cdots &  t^n_u\end{array}\right\|.
\end{align}
Similarly,
\begin{align}\label{refEsercSostFinal2M}
 \frac{\partial (\overline{d}^{n}\t) }{\partial u_n }=\left\|\begin{array}{ccccc}t_{x_1}^1  & \cdots & t^i_{x_1}  & \cdots & t^n_{x_1}  \\t_{x_2}^1  & \cdots & t^i_{x_2}  & \cdots & t^n_{x_2}  \\\vdots & \vdots & \vdots & \vdots & \vdots \\t_{x_{n-1}}^1  & \cdots & t^i_{x_{n-1}}  & \cdots & t^n_{x_{n-1}}  \\ t_u^1 & \cdots &  t^i_u & \cdots &  t^n_u\end{array}\right\|.
\end{align}
Observe that  \REF{refEsercSostFinal1M} and \REF{refEsercSostFinal2M} are the multi--dimensional analogues of  \REF{refEsercSostFinal1} and \REF{refEsercSostFinal2},  respectively. 
In other words, $\nu$ is composed of the $n\times n$ minors (with sign) of the $n\times (n+1)$ matrix
\begin{equation}
(T_1, T_2,\ldots,T_{n-1}, T)^t,\nonumber
\end{equation}
where the $n$ vectors
\begin{eqnarray}
T_i&=& (\t_{x^i},y_{x^i}),\quad i=1,\ldots, n-1,\nonumber\\
T&=&(\t_u,y_u),\nonumber
\end{eqnarray}
form a basis for the tangent space of $F(\partial Y)=\{x_n=0\}$, i.e., $\nu=T_1\times\cdots\times T_{n-1}\times T$.
\end{proof}
Notice that \REF{refEsercSostFinal0N} is formally the same as  \REF{refEsercSostFinal0}: the synthetic language of multi--linear algebra allowed to handle all the \virg{total Jacobian} determinants involved in the proof, without any additional difficulty as compared with   the one--dimensional case.

   \subsection{The   soap film}\label{ProblemaSimpatico}

 We generalize now a classical example that can be found in   Giaquinta and Hildebrandt's book  \cite{Giaq} (Section 2.4). Namely, in the hypotheses of Theorem \ref{thEgregio} above, suppose that $\lambda$ is the (hypersurface) area Lagrangian, i.e., locally,
 \begin{equation}
\LL=\sqrt{1+\sum_{i=1}^n y_i^2}.\nonumber
\end{equation}
Then
\begin{equation}
H=\frac{1}{\sqrt{1+\sum_{i=1}^n y_i^2}}\left( y_1,y_2,\ldots,y_n,-1\right)\nonumber
\end{equation}
is precisely the unit normal vector to the surface $y=y(\t)$.  This proves the last result of this paper.
\begin{corol}\label{corFinale}
 Let $Y\subseteq\R^{n+1}$ be a closed smooth domain with smooth\linebreak nonempty boundary. If a hypersurface $L\in\A_Y$ is a solution of the variational problem with free boundary values determined by  the area functional,   
  then $L$ must intersect orthogonally $\partial Y$ everywhere.
\end{corol}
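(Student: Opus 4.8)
The plan is to obtain Corollary \ref{corFinale} as an immediate specialization of Theorem \ref{thEgregio}, the only extra input being the geometric meaning of the vector field $H$ for the area Lagrangian. First I would fix an arbitrary point $\theta\in\partial L=L\cap\partial Y$. Since $L\in\A_Y$ is a smooth $n$--submanifold nowhere tangent to $\partial Y$ (item (3) of Definition \ref{defAmmiissibile}), near $\theta$ one may choose coordinates $(\t,y)$ on $\R^{n+1}$ in which $L$ is the graph of some $y=y(\t)$ over a domain $\Omega\subseteq\R^n$; this is precisely the setting of Theorem \ref{thEgregio}, and, since the conclusion of that theorem is pointwise and coordinate--free, passing to such adapted coordinates costs nothing. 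Because $L$ is critical for $\S_\lambda$, Theorem \ref{thEgregio} then asserts that the normal $\nu_\theta$ to the hypersurface $\partial Y$ at $\theta$ is orthogonal to $H\bigl(\t,y(\t),y_1(\t),\ldots,y_n(\t)\bigr)$.

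Next, for the area Lagrangian $\LL=\sqrt{1+\sum_{i=1}^n y_i^2}$ the vector $H$ has already been written out in Subsection \ref{ProblemaSimpatico} and identified, up to sign, with the \emph{unit normal field} $\nu_L$ of the hypersurface $L$ (one checks directly that $\LL^{-1}(y_1,\ldots,y_n,-1)$ has unit length and annihilates the tangent frame $(e_i,y_i)$, $i=1,\ldots,n$, of the graph $y=y(\t)$). Hence the orthogonality relation furnished by Theorem \ref{thEgregio} reads $\nu_\theta\cdot\nu_L(\theta)=0$ at every $\theta\in\partial L$.

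Finally I would invoke the elementary fact that two hypersurfaces of $\R^{n+1}$ meeting transversally along a common $(n-1)$--dimensional submanifold meet orthogonally exactly when their normals at the common points are orthogonal. The intersection $L\cap\partial Y=\partial L$ is transversal by admissibility, and $\nu_\theta\cdot\nu_L(\theta)=0$ has just been established, so $L$ meets $\partial Y$ orthogonally at $\theta$; since $\theta$ was arbitrary, the intersection is orthogonal everywhere, which is the assertion. I do not anticipate any genuine obstacle here: all of the analytic and cohomological content is packed into Theorem \ref{thEgregio}, and what remains is only the routine bookkeeping above---localizing near a boundary point, quoting the identification of $H$ with the unit normal of $L$, and restating ``orthogonal normals'' as ``orthogonal hypersurfaces''.
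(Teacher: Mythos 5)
Your proposal is correct and follows essentially the same route as the paper: Subsection \ref{ProblemaSimpatico} likewise obtains the corollary by specializing Theorem \ref{thEgregio} to the area Lagrangian, observing that $H=\LL^{-1}(y_1,\ldots,y_n,-1)$ is the unit normal to the graph $y=y(\t)$, and reading off orthogonality of $L$ and $\partial Y$ from the orthogonality of their normals. The extra bookkeeping you supply (localizing at a boundary point, checking that $H$ annihilates the tangent frame and has unit length) is implicit in the paper's one-line verification.
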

In particular, Corollary \ref{corFinale} shows that a     soap film, whose boundary is constrained to slide over the inner surface of a fixed domain (e.g., a pipe of arbitrary shape), tends toward  a position of equilibrium where it forms a right angle with the walls of the container (besides, of course, possessing zero mean curvature).

\begin{remark}
If $n=2$ and $Y=D\times\R^2$, where $D\subseteq\R$ is diffeomorphic to a closed disk, then a surface $L$ from Corollary \ref{corFinale} above is forced to be the graph of a constant function $y=y(t_1,t_2)$. Indeed, since $Y$ is a  surface with zero mean curvature, its maximum is attained on $\partial D$. Hence, there exists a point $\theta\in \partial L$, such that $\partial L=L\cap\partial Y$ has negative curvature. But $L$ hits $\partial Y$ orthogonally in $\theta$, thus, along the normal direction to $\partial L$, the surface $L$ must possess positive curvature, i.e.,  there must exist a point $\theta'\in L$, in a neighborhood of $\theta$, such that the $y$--component of $\theta'$ is greater than the $y$--component of $\theta$, thus contradicting the fact that $\theta$ corresponds to a maximum. It follows that $L$ must be the graph of a constant function. It would be nice to generalize this simple observation to multi--dimensional cases.
\end{remark}



\begin{thebibliography}{99}




	
	\bibitem{MR590637}\uppercase{Anderson, I. M.--Duchamp, T.}: 
    {\it On the existence of global variational principles},
   {Amer. J. Math.},
{\bf 102} 
   (1980),
    {781--868}.
\bibitem{Sym}
\uppercase{Bocharov, A. V.---Chetverikov,  V. N.---Duzhin,  S.V.---Khorkova,   N.G.---Krasil'shchik, I.S.---Samokhin,    A.V.---Torkhov,   Yu.N.---Verbovetsky,  A.M.---Vinogradov, A. M.:} {\it Symmetries and Conservation Laws for Differential 
Equations of Mathematical Physics}, Amer. Math. Soc., 1999.
\bibitem {MR658304}
\uppercase{ Bott, R.---Tu,  L.~W.:} 
\newblock {\em Differential forms in algebraic topology},
\newblock  Springer--Verlag,
{New York},
{1982}.


 \bibitem{Brunt}
\uppercase{Brunt, B.:}
 {\it The Calculus of Variations},
Springer, New York, 2006.


\bibitem{MR1083148}
\uppercase{Bryant, R.~L.---Chern,   S.~S.---Gardner,   R.~B.---Goldschmidt,  H.~L.---Griffiths,    P.~A.:}
\newblock {\em Exterior differential systems}, 
\newblock Springer--Verlag, New York, 1991.



\bibitem{Giaq}
\uppercase{Giaquinta, M.---Hildebrandt, S.:}
 {\it Calculus of Variations, I},
Springer, Berlin and Heidelberg, 1996.


   
 \bibitem{KrasVer}
\uppercase{Krasil'shchik J.~S.---Verbovetsky, A.~M.:} 
 {\it Geometry of Jet Spaces and Integrable Systems}, J. Geom. Phys. {\bf 61} (2011), 1633--1674. %



\bibitem{MR0394755}\uppercase{Krupka, D.}: 
    {\it Of the structure of the Euler mapping},
   {Arch. Math. (Brno)},
 {\bf 10}
   (1974),
    {55--61}.


\bibitem{MR1062026} 
\uppercase{Krupka, D.}: 
 {\em Variational sequences on finite order jet spaces},
 {World Sci. Publ., Teaneck, NJ},
 {1990}.
  
   \bibitem{Mor2010}\uppercase{Moreno, G.:} \emph{A $\mathcal{C}$--Spectral Sequence associated with free boundary variational problems},  
 Proceedings of the Eleventh International Conference on Geometry, Integrability and Quantization, June 5--10, 2009, Varna, Bulgaria, Iva\"ilo M. Mladenov, Gaetano Vilasi and Akira Yoshioka, Editors, Avangard Prima, Sofia (2010), 146--156.
  
  \bibitem{MorenoCauchy}
\uppercase{MORENO, G.}: 
\textit{The geometry of the space of Cauchy data of nonlinear PDEs}, Central European Journal of Mathematics (in press), 
\texttt{http://arxiv.org/abs/1207.6290}.





%
 






%





 

 \bibitem{Mor2007}
 \uppercase{Moreno, G.---Vinogradov, A. M.:} 
  {\it Domains in Infinite Jets: $\C$--Spectral Sequence}, Dokl. Math. \textbf{75}, nr. 2 (2007), 204--207.



  
  
%
%


%
%
 





\bibitem{Zbynek}
 \uppercase{Urban, Z.---Krupka, D.}: 
 {\it Variational sequences in mechanics on Grassmann fibrations},
 {Acta Appl. Math.},
 {\bf 112},
 {(2010}),
 {225--249}.
 
	



 
 


%
%
%
%
%
 
	
    \bibitem{MR1857908} 
\uppercase{Vinogradov, A. M.}: 
  {\it Cohomological analysis of partial differential equations and
   secondary calculus},
   {American Mathematical Society},
 {Providence, RI},
 {2001}.
%
 
 


\bibitem{MR739952}\uppercase{Vinogradov, A. M.}: 
    {\it The ${\C}$-spectral sequence, Lagrangian formalism, and
   conservation laws. II. The nonlinear theory},
   {J. Math. Anal. Appl.},
 {\bf 100} 
   (1984),
%
    {41--129}.
 


%
 
	
%
%
%
    

 
 
 
\bibitem{MR2504466} 
 \uppercase{Vitagliano, L.}: 
 {\it Secondary calculus and the covariant phase space},
 {J. Geom. Phys.},
 {\bf 59},
 {(2009)}, 
 {426--447}.
 




 
 
\end{thebibliography}
\end{document}